\newtheorem{Th}{Theorem}
\newtheorem{Lem}{Lemma}
\newtheorem{Prop}{Proposition}
\newtheorem{Cor}{Corollary}
\newtheorem{Def}{Definition}
\newtheorem{Hyp}{Hypothesis}
\newtheorem{Rmk}{Remark}
\def\bbN{\mathbb{N}}
\def\bbR{\mathbb{R}}
\def\bbS{\mathbb{S}}
\def\bbZ{\mathbb{Z}}
\def\rmex{{\mathrm{ex}}}
\def\rmd{{\mathrm{d}}} 
\def\d{{\mathrm{d}}} 
\def\rmZ{{{\tiny\mathrm{Z}}}} 
\def\cB{\mathcal{B}}
\def\cC{\mathcal{C}}
\def\cD{\mathcal{D}}
\def\cE{\mathcal{E}}
\def\cH{\mathcal{H}}
\def\cL{\mathcal{L}}
\def\cS{\mathcal{S}}
\def\cT{\mathcal{T}}
\def\gH{W^\mathfrak{h}}
\def\bmmu{{\bm \mu}}
\def\div{\operatorname{div}}
\def\rot{\operatorname{rot}}
\def\supp{\operatorname{supp}}
\def\diam{\operatorname{diam}}
\def\eps{\varepsilon}
\title{Static ferromagnetic materials:\\ from the microscopic to the mesoscopic scale}
\author{\normalsize Brigitte Bid\'egaray-Fesquet, Quentin Jouet, and St\'ephane Labb\'e\\ 
\small Laboratoire Jean Kuntzmann and Universit\'e de Grenoble, \\
\small BP 53, 38041 Grenoble Cedex 1, France}
\date{\today}
\begin{document}

\maketitle

\begin{abstract}
Thanks to averaging processes and $\Gamma$-convergence techniques, we are able to link a microscopic description of ferromagnetic materials based on  spin lattices and their mesoscopic description in the static framework for the three fundamental contributions: exchange, magnetostatics and external.The results are in accordance with the classical continuous description of ferromagnetic phenomena and justifies it. This work is a seed towards a dynamic description of ferromagnetic materials.
\end{abstract}

\section{Introduction}

The continuous description of ferromagnetic materials has been introduced since the 1960 {\it via} the micromagnetism model developed by Brown \cite{Brown63}. 
This model, based on a thermodynamical description of ferromagnetic phenomena, has proved its efficiency in numerous works {\it via} relevant simulations (\cite{proc072215,Brown.LaBonte:Structure,Fischer:grain.size,Vukad:Influence}). Nevertheless, several problems persist in the description, from the thermic effects to the magnetostrictive behaviors. 
These problems are very sharp and, in order to understand their modeling, one need to understand the microscopic behavior of magnetization (atomic scale) and the link between this scale and the mesoscopic scale (continuous magnetic matter scale of ferromagnetic effects). 
In this paper, we focus on the beginning of this program: the link between a microscopic description of ferromagnetic materials and their mesoscopic description in the static framework for the three fundamental contributions: exchange, magnetostatic and external field \cite{Aharoni:introduc,MR2060431,Labbe.Bertin:Microwave}. 
Here, the microscopic scale designates the atomic scale where atom nuclei are assumed to be pointwise electric charges bearing one magnetic moment induced by the atom electronic cloud; the mesoscopic scale designates the continuous description of matter for which the ferromagnetic effects are significative. One of the first formal approach of the link between the mesoscopic behavior of ferromagnetic materials and a microscopic thermodynamic  description of this behavior can be found in \cite{Brown66} for a model focused on the magnetoelastic phenomenon. In this book, the authors gives a formal approach of the asymptotics.

We can cite also several mathematical studies connected to the problem we study in this paper. First, in \cite{MR1308877}, authors study the convergence of energies for given sequences of magnetizations and various oscillatory behaviors of the magnetization. This approach does not consider the convergence of minimizers but of energies and shows the importance of controlling local oscillations of the magnetization in order to obtain a micromagnetic model which ensures that the modulus of the limit magnetization is constant on the ferromagnetic domain. We can also mention the study performed in \cite{MR2186037} who emphasizes the importance of controlling local oscillations of the magnetization in order to compute the limit magnetostatic interactions. This study is a derivation of the interaction forces between two magnetic bodies for given magnetic configurations built from the microscopic interactions between magnetic moments. It gives interesting information about the magnetostatic field of a body depending on the local regularity of the magnetization. The study of the limit mechanical stress is also studied in \cite{MR1423008} where the authors build a theory for the description of deformable magnetic bodies. A micro--meso approach is adopted in order to justify, from microscopic well interactions of dipoles, the interaction at the meso-scale, but this does not tackle the problem of the limit of minimizers. Nevertheless, in \cite{MR2505362}, a study of the minimizer convergence is performed in the case of Ginzburg--Landau systems. Authors focus on a $\Gamma$-convergence with a Dirichlet energy where the limit constraint is the divergence-free condition and not the constraint on the local modulus of the obtained field. In this case, the constraint on the modulus derives from a penalization term and not from the modeling hypothesis. 

In our paper, we are interested in the convergence of minimizer sequences of the microscopic description of ferromagnetic materials toward a continuous micromagnetic description which respects the constraint of uniformity of the local modulus of the magnetization. This constraint is obtained by a hypothesis on the local variations of the magnetization. In particular, this control of the local variation induced by the Heisenberg interaction model has to be tempered with a hypothesis on allowed defects. Forgetting defects would give rise to a constrained system and in particular limits in $H^s(\Omega)$ for $s$ strictly greater than $1$. Such a regularity would not allow key microstructures of the micromagnetism theory: the vortices. 

At the microscopic scale, we describe the material as a regular periodic spin lattice intersected with the magnetic domain (a bounded open set of $\bbR^3$). Section \ref{sec_lattice} is dedicated to the mathematical description of the microscopic model (the spin lattice) and to an averaging process towards a mesoscopic model which leads to a constant norm magnetic field, in accordance with usual models of micromagnetism. The microscopic energies are introduced and several modeling hypotheses are set. The main hypothesis is induced by the adiabatic behavior of the Heisenberg energy compared to the global ferromagnetic energy. This hypothesis gives a constraint on neighboring magnetic moments. In fact, this constraint is verified by a set of minimizers for a given lattice. 

The energy induced by the Heisenberg interaction is more difficult to treat. Section \ref{sec_exchange} addresses the study of this contribution and, in particular, its asymptotic behavior for sequences of lattices verifying the modeling hypothesis. This hypothesis ensures compactness which allows to use $\Gamma$-convergence tools in $H^1$. The limiting energy constructed from the discrete magnetization is the exchange energy (Theorem \ref{th_main}). 

In Section \ref{sec_total}, we introduce the demagnetization and external energies both for discrete lattices and the continuous model and finally obtain a convergence result for the sum of the Heisenberg, demagnetization, and external contributions (Theorem \ref{th_total}).


\section{Mathematical descriptions of a spin lattice}
\label{sec_lattice}

\subsection{Atomic lattice description}

We consider a collection of spins which are located on the nodes of a periodic lattice $\cL$ in the $\bbR^d$ space ($d=1,2,3$) with mesh size $a>0$.
In the scope of this paper we will restrict to the case of 1D, square or cubic lattices, $\cL$ is simply $a\bbZ^d$, but we can think of more complex lattices. 
Here all the nodes play the same role to ensure a unique definition of neighbors. 
In the opposite case a multi-species model should be used. \\
The nodes are indexed by $i\in\bbN$ and we denote by $x_i$ the $i$th spin location and $\mu_{x_i}$ the corresponding spin value (magnetic moment). 
The norm of these magnetic moments are scaled to the unit value and therefore for all $i\in\bbN$, $\mu_{x_i}\in\bbS^2$, where $\bbS^2$ is the unit sphere of $\bbR^3$.

Instead of describing a collection of magnetic moments, we can gather all the values in one single vector field $\mu$ defined by
\begin{equation*}
\forall x\in \bbR^d ,\;\; \mu (x) = \sum_{i\in \bbN} \mu_{x_i} \delta_{x_i} (x),
\end{equation*}
where $\delta_{x_i}$ is the Dirac delta function centered at $x_i$.

\subsection{Scaling}

We want to obtain an homogenized model of the spin lattice, i.e. give a description when this lattice is seen from far. 
Instead of really doing this, we will perform some dual transformation, i.e. consider only nodes that are included in some fixed bounded domain $\Omega$, and shrink the lattice (as shown on Figure \ref{fig_shrink} for $d=2$).
More precisely, we suppose that $0\in\mathring\Omega$ and for all $n\in\bbN^*$, using the homothety $h_n(x):= x/n$, $\forall x \in \bbR^d$, we define
\begin{itemize}
\item $\cL_n = h_n(\cL)$, the shrunk lattice;
\item $\cL_{n,\Omega} = \cL_n\cap\Omega$, the nodes of the shrunk lattice that belong to $\Omega$;
\item $\displaystyle \mu_n \in (\bbS^2)^{\cL_{n,\Omega}}$, the shrunk vector field.  
\end{itemize}
We notice that for all $y\in\bbR^d$, 
\begin{equation*}
\mu_n(y) = \sum_{x\in\cL_{n,\Omega}}\mu_{n,x}\delta_x(y),
\end{equation*}
where $\mu_{n,x}=\mu_{h_n^{-1}(x)}$. 
We assume that $\Omega$ has a sufficiently regular boundary in order that the number of nodes belonging to $\cL_{n,\Omega}$ is 
\begin{equation*}
\#\cL_{n,\Omega} = C n^d \left(1+O\left(\frac1n\right)\right).
\end{equation*}
where $C$ is a constant which only depends on $\cL$, $a$ and $\Omega$ (which are constants of our problem).

\begin{figure}[H]
\begin{center}
\begin{tikzpicture}[scale=.4]
\foreach \x in {0,1,...,10}
  \foreach \y in {0,1,...,10}
    {
     \filldraw[xshift=\x cm,yshift=\y cm] (0,0) circle (.1 cm);
     }
\draw[thick,rounded corners=8pt] (3,3) -- (7,2) -- (9,3) -- (7,6) -- (8,8) -- (6,7) -- (3,8) -- (2,5) -- cycle;
\draw (5,-1) node {(a)};

\foreach \x in {0,0.5,...,10}
  \foreach \y in {0,0.5,...,10}
     {
     \draw[xshift=12cm+\x cm,yshift=\y cm] (0,0) circle (.03 cm);
     }
\draw[thick,rounded corners=8pt,xshift=12cm] (3,3) -- (7,2) -- (9,3) -- (7,6) -- (8,8) -- (6,7) -- (3,8) -- (2,5) -- cycle;
\draw (17,-1) node {(b)};

\shadedraw[thick,rounded corners=8pt,shading=radial,xshift=24cm] (3,3) -- (7,2) -- (9,3) -- (7,6) -- (8,8) -- (6,7) -- (3,8) -- (2,5) -- cycle;
\draw (29,-1) node {(c)};
\end{tikzpicture}
\end{center}
\caption{\label{fig_shrink}Scaling of a 2D spin lattice: each sub-plot represents the domain $\Omega$ and (a) the square lattice $\cL$; (b) the shrunk square lattice $\cL_2$; (c) the homogeneized lattice in $\Omega$ as $n\to\infty$.}
\end{figure}
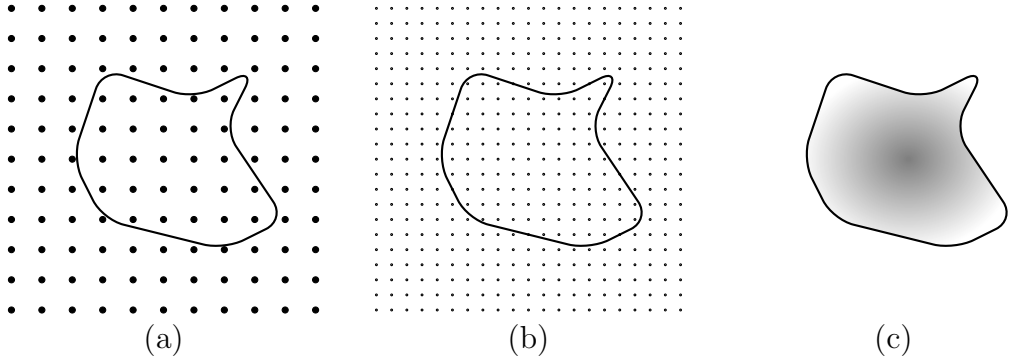

\subsection{Regularity assumptions}

In order to pass to the limit as $n\to\infty$, we have to assume that the magnetic moments are locally almost aligned. 
The definition of locality is given by an integer multiple $k\in\bbN^*$ of the shrunk mesh size $a/n$.

We define a first regularity assumption that only depends on the distance.
For all $x\in\bbR^d$ and $r>0$, we denote by $\cB(x,r)$ the ball of center $x$ and radius $r$ in $\bbR^d$. 

\begin{Hyp}
\label{hyp_local}
For all $n\in \bbN^*$, there exists $\zeta_n>0$ such that 
\begin{equation*}
\forall x \in \Omega,\ \forall y,z\in\cL_{n,\Omega}\cap\cB\left(x,\frac{ka}n\right),
\hspace{1cm} 
1-\zeta_n\leq\mu_{n,y}\cdot\mu_{n,z}\leq 1,
\end{equation*}
where $\zeta_n=O(1/n^2)$.
\end{Hyp}

We are indeed interested in the limit when we have a more and more dense lattice of more and more aligned spins. We therefore perform a diagonal process and correlate $n$ and the  spin alignment.

To define the averaging process we will also need to assume the $\Omega$ has the \textit{uniform cone property}.

\begin{Hyp}
\label{hyp_cone}
There exists an angle $\alpha$ and a radius $r$, such that for all $y\in\Omega$ there exists a cone $C_y$ of angle $\alpha$ with vertex at $y$ such that $B(y,r)\cap C_y\subset \Omega$.
\end{Hyp}

\subsection{Partitions adapted to the lattices}

Let us first work on the initial lattice.
To this aim, we define a partition of unity $(\rho_x)_{x\in k\cL}$ adapted to the dilated lattice $k\cL$.
Since our lattice is uniform and all the nodes are equivalent, all the $\rho_x$ are equal up to a translation (see Figure \ref{fig_partition_kL}), i.e. there exists $\rho^\star\in\cC_0^\infty(\bbR^d;\bbR)$ such that
\begin{equation*}
\forall x\in k\cL,\ \forall y\in\bbR^d,\ \rho_x(y)=\rho^\star(y-x).
\end{equation*}

\begin{figure}[H]
\begin{center}
\begin{tikzpicture}[scale=.7]
\draw (0,0) -- (17,0);
\foreach \x in {.5,1.5,...,16.5}
     \filldraw[xshift=\x cm] (0,0) circle (.1 cm);
\foreach \x in {0,5,10}
     \draw[xshift=.5cm+\x cm] (0,0) .. controls (.5,0) and (.5,2) .. (1,2) -- (5,2) .. controls (5.5,2) and (5.5,0) .. (6,0);
\draw[<->] (3.5,-.5) -- node[below] {$a$}(4.5,-.5);
\draw[<->] (5.5,-.5) -- node[below] {$ka$}(10.5,-.5);
\end{tikzpicture}
\end{center}
\caption{\label{fig_partition_kL}Partition $(\rho_x)_{x\in k\cL}$ in dimension 1.}
\end{figure}
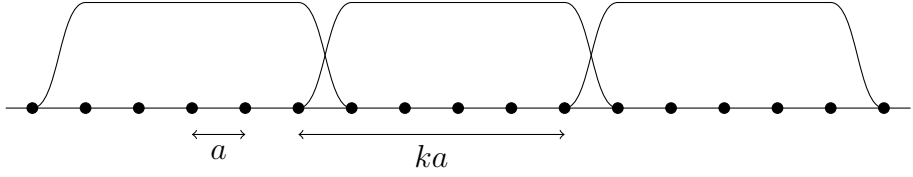

Without lack of generality, we assume that $\rho^\star>0$ and $\supp\rho^\star\in B(ka)$.
By definition of a partition
\begin{equation*}
\forall y\in\bbR^d,\ \sum_{x\in k\cL} \rho_x(y) = \sum_{x\in k\cL} \rho^\star(y-x) = 1.
\end{equation*}
Let $n_k$ be the number of nodes in $\cB(0,ka)$, which is e.g. $n_k\le(2k+1)^d$ for a cubic lattice (number of nodes contained in the cube around the sphere).
If we now sum over all the $x\in\cL$, we have the same translation property and 
\begin{equation*}
\forall y\in\bbR^d,\ \sum_{x\in \cL} \rho^\star(y-x) = n_k.
\end{equation*}
Defining $\rho=n_k^{-1}\rho^{\star}$, we have 
\begin{equation*}
\forall y\in\bbR^d,\ \sum_{x\in\cL} \rho(y-x) = 1,
\end{equation*}
and therefore a continuous partition of unity on $\bbR^d$.\\

Now, we adapt this partition to the shrunk lattice $\cL_n$.
Defining
\begin{equation*}
\forall n\in\bbN^*,\ \rho_n(x)=\rho(h_n^{-1}x),
\end{equation*}
we have a continuous partition adapted to the shrunk lattice (see Figure \ref{fig_partition_x})
\begin{equation*}
\forall y\in\bbR^d,\ \sum_{x\in\cL_n}\rho_n(y-x)=1 \textrm{ and } \supp\rho_n\subset\cB\left(0,\frac{ka}n\right).
\end{equation*}

\begin{figure}[H]
\begin{center}
\begin{tikzpicture}[scale=.7]
\draw (0,0) -- (17,0);
\foreach \x in {.5,1.5,...,16.5}
     \filldraw[xshift=\x cm] (0,0) circle (.1 cm);
\foreach \x in {0,5,10}
     \draw[xshift=.5cm+\x cm] (0,0) .. controls (.5,0) and (.5,2) .. (1,2) -- (5,2) .. controls (5.5,2) and (5.5,0) .. (6,0);
\foreach \x in {0,1,...,10}
     \draw[style=dotted,xshift=.5cm+\x cm] (0,0) .. controls (.5,0) and (.5,2) .. (1,2) -- (5,2) .. controls (5.5,2) and (5.5,0) .. (6,0);
\draw[<->] (3.5,-.5) -- node[below] {$a/n$}(4.5,-.5);
\draw[<->] (5.5,-.5) -- node[below] {$ka/n$}(10.5,-.5);
\end{tikzpicture}
\end{center}
\caption{\label{fig_partition_x}Partition $(\rho_x)_{x\in\cL_n}$ in dimension $d=1$.}
\end{figure}

Since $\nabla \rho^\star$ is uniformly bounded (i.e. $O(1)$), then $\nabla \rho_n$ is uniformly $O(n)$. \\

To define an averaging process we will have to use a truncated partition of unity, namely
\begin{equation*}
\Phi_n(y)=\sum_{x\in\cL_{n,\Omega}}\rho_n(y-x).
\end{equation*}
We clearly have $0\leq\Phi_n(y)\leq1$ and as a finite sum of $\cC^\infty$ functions, $\Phi_n\in\cC^\infty(\bbR^d;\bbR)$.
We also have a stronger result, namely $\Phi_n(y)$ is bounded from below uniformly in $n$ and $y\in\Omega$: there exists $b>0$ and $n_0\in\bbN^*$ such that 
\begin{equation*}
\forall y\in\Omega,\ \forall n\geq n_0,
\hspace{1cm}
b\leq\Phi_n(y)\leq1.
\end{equation*}
This result stems from the "cone property" (Hypothesis \ref{hyp_cone}).
Besides $\nabla\Phi_n$ is uniformly $O(n)$.

\subsection{Averaging process}

From the sequence $(\mu_n)_{n\in \bbN^*}$ of vector fields, we now define a new sequence $(m_n)_{n\geq n_0}$ of regular vector fields on $\Omega$: for all $n\geq n_0$, we define
\begin{equation*}
m_n = \frac1{\Phi_n}\mu_n\star\rho_n,
\end{equation*}

\begin{Rmk} 
Since $\rho_n$ and $\Phi_n\in\cC^\infty(\bbR^d;\bbR)$, and $\Phi_n$ is bounded from below by $b>0$, it immediately follows that $m_n\in \cC^\infty(\Omega;\bbR^3)$. 
In particular, since $\Omega$ is bounded, $m_n\in H^1(\Omega;\bbR^3)$.
\end{Rmk}

\begin{Lem} 
\label{lem_borneL2}
The sequence $(m_n)_{n\geq n_0}$ is bounded in $L^\infty(\Omega;\bbR^3)$ and $L^2(\Omega;\bbR^3)$. 
\end{Lem}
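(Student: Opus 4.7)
The plan is to exploit two simple facts: each magnetic moment $\mu_{n,x}$ has unit norm in $\bbS^2$, and by construction $\rho_n \ge 0$ with $\sum_{x \in \cL_{n,\Omega}} \rho_n(y-x) = \Phi_n(y)$. Combining these should immediately yield a pointwise bound $|m_n(y)| \le 1$, from which the $L^\infty$ bound on $\Omega$ follows, and hence the $L^2$ bound because $\Omega$ is bounded.

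More concretely, I would start by computing the convolution explicitly. Since $\mu_n = \sum_{x \in \cL_{n,\Omega}} \mu_{n,x} \delta_x$, for any $y \in \Omega$ we have
\begin{equation*}
(\mu_n \star \rho_n)(y) = \sum_{x \in \cL_{n,\Omega}} \mu_{n,x}\,\rho_n(y-x).
\end{equation*}
Taking norms in $\bbR^3$, applying the triangle inequality, and using $|\mu_{n,x}|=1$ together with $\rho_n \ge 0$ gives
\begin{equation*}
|(\mu_n \star \rho_n)(y)| \le \sum_{x \in \cL_{n,\Omega}} \rho_n(y-x) = \Phi_n(y).
\end{equation*}

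Next, I would invoke the uniform lower bound $\Phi_n(y) \ge b > 0$ (valid for all $y \in \Omega$ and $n \ge n_0$, which comes from Hypothesis \ref{hyp_cone} as already noted in the text) simply to ensure that the division is legitimate, and then conclude
\begin{equation*}
|m_n(y)| = \frac{|(\mu_n \star \rho_n)(y)|}{\Phi_n(y)} \le \frac{\Phi_n(y)}{\Phi_n(y)} = 1,
\end{equation*}
so $\|m_n\|_{L^\infty(\Omega;\bbR^3)} \le 1$ uniformly in $n$.

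Finally, since $\Omega$ is bounded (and hence has finite Lebesgue measure $|\Omega|$), the continuous embedding $L^\infty(\Omega) \hookrightarrow L^2(\Omega)$ yields $\|m_n\|_{L^2(\Omega;\bbR^3)} \le |\Omega|^{1/2}$. I do not anticipate any real obstacle here: the statement is essentially a direct consequence of $|\mu_{n,x}|=1$, the positivity of $\rho_n$, and the normalization $\sum_x \rho_n(\cdot - x) = \Phi_n$ built into the averaging operator. The only subtle point worth mentioning is that neither Hypothesis \ref{hyp_local} nor any regularity of the magnetic configuration is needed for this lemma; the cone property enters only implicitly to guarantee that $\Phi_n$ does not vanish so that $m_n$ is well-defined.
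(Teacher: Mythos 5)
Your proof is correct and follows essentially the same route as the paper: write out the convolution as a finite sum over lattice nodes, apply the triangle inequality with $|\mu_{n,x}|=1$ and $\rho_n\ge 0$ to get $|m_n(y)|\le \Phi_n(y)/\Phi_n(y)=1$, and deduce the $L^2$ bound $\sqrt{|\Omega|}$ from the boundedness of $\Omega$. Your added remark that the cone property enters only to guarantee $\Phi_n\ge b>0$ so that $m_n$ is well-defined is accurate and consistent with the paper.
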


\begin{proof}
Recall
\begin{equation*}
\mu_n = \sum_{x\in\cL_{n,\Omega}}\mu_{n,x}\delta_x,
\end{equation*}
hence, for all $y\in \Omega$,
\begin{equation*}
m_n(y) =  \frac1{\Phi_n(y)}\sum_{x\in\cL_{n,\Omega}} \mu_{n,x} \rho_n(y-x).
\end{equation*}
We clearly have 
\begin{eqnarray*}
\left| \frac1{\Phi_n(y)}\sum_{x\in\cL_{n,\Omega}} \mu_{n,x} \rho_n(y-x)\right|
& \leq &  \frac1{\Phi_n(y)}\sum_{x\in\cL_{n,\Omega}} |\mu_{n,x}| \rho_n(y-x) = \frac{\Phi_n(y)}{\Phi_n(y)}= 1, \\
\|m_n\|_{L^\infty(\Omega;\bbR^3)} & \leq & 1, \\ 
\|m_n\|_{L^2(\Omega;\bbR^3)} & \leq & \sqrt{|\Omega|}. 
\end{eqnarray*}

\end{proof}

\begin{Prop} 
\label{prop_borneH1}
Under Hypothesis \ref{hyp_local}, $(m_n)_{n\geq n_0}$ is a bounded sequence in $H^1(\Omega;\bbR^3)$. 
\end{Prop}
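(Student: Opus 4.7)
The $L^2$ bound is already provided by Lemma \ref{lem_borneL2}, so the content of the proposition is to bound $\nabla m_n$ in $L^2(\Omega;\bbR^3)$. My plan is to rewrite $\Phi_n m_n = \mu_n \star \rho_n$ and differentiate:
\begin{equation*}
\Phi_n(y) \nabla m_n(y) = \sum_{x\in\cL_{n,\Omega}} \mu_{n,x} \nabla \rho_n(y-x) - m_n(y) \nabla \Phi_n(y).
\end{equation*}
A naive pointwise estimate is too weak: one has $|\nabla\rho_n|=O(n)$ and, since $\Phi_n$ is not constant near $\partial\Omega$, also $|\nabla\Phi_n|=O(n)$, which would only yield $|\nabla m_n|=O(n)$. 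The regularity Hypothesis \ref{hyp_local} has to be exploited to obtain cancellation.

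The key manipulation is to subtract a well-chosen local pivot. For each $y\in\Omega$ and $n\geq n_0$, the lower bound $\Phi_n(y)\geq b>0$ guarantees that $S_y := \cL_{n,\Omega}\cap\cB(y,ka/n)$ is nonempty; pick $x_0=x_0(y)\in S_y$. Since $\sum_{x\in\cL_{n,\Omega}}\nabla\rho_n(y-x)=\nabla\Phi_n(y)$, we can rewrite
\begin{equation*}
\Phi_n(y)\nabla m_n(y) = \sum_{x\in S_y}(\mu_{n,x}-\mu_{n,x_0})\nabla\rho_n(y-x) + (\mu_{n,x_0}-m_n(y))\nabla\Phi_n(y),
\end{equation*}
using that $\nabla\rho_n(y-x)=0$ for $x\notin S_y$. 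Both $\mu_{n,x_0}$ and the $\mu_{n,x}$ appearing in the sum belong to $\cL_{n,\Omega}\cap\cB(y,ka/n)$ with $y\in\Omega$, so Hypothesis \ref{hyp_local} yields
\begin{equation*}
|\mu_{n,x}-\mu_{n,x_0}|^2 = 2(1-\mu_{n,x}\cdot\mu_{n,x_0}) \leq 2\zeta_n = O(1/n^2).
\end{equation*}
Moreover, since $m_n(y)$ is a convex combination (with weights $\rho_n(y-x)/\Phi_n(y)$) of the $\mu_{n,x}$ for $x\in S_y$, the same estimate gives $|\mu_{n,x_0}-m_n(y)|\leq\sqrt{2\zeta_n}=O(1/n)$.

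It remains to count: $\#S_y\leq n_k$ is uniformly bounded and $|\nabla\rho_n|, |\nabla\Phi_n|$ are $O(n)$, so both terms on the right are $O(1/n)\cdot O(n)=O(1)$ uniformly in $y\in\Omega$ and $n$. Dividing by $\Phi_n(y)\geq b$ yields $\|\nabla m_n\|_{L^\infty(\Omega;\bbR^{3\times d})}\leq C$, and integrating over the bounded domain $\Omega$ gives the desired $L^2$-bound. Combined with Lemma \ref{lem_borneL2}, this shows $(m_n)$ is bounded in $H^1(\Omega;\bbR^3)$. The only delicate step is the pivot trick for $\nabla\Phi_n$: without rewriting the second term as $(\mu_{n,x_0}-m_n)\nabla\Phi_n$ in order to apply the regularity hypothesis, the boundary contribution to $\nabla\Phi_n$ would spoil the estimate.
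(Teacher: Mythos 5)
Your proof is correct and follows essentially the same route as the paper: the same pivot trick (subtracting a reference spin $\mu_{n,x_0}$ chosen in $\cL_{n,\Omega}\cap\cB(y,ka/n)$), the same use of Hypothesis \ref{hyp_local} to get $|\mu_{n,x}-\mu_{n,x_0}|=O(\zeta_n^{1/2})=O(1/n)$, and the same counting of the $O(1)$ nonzero terms against the $O(n)$ bounds on $\nabla\rho_n$ and $\nabla\Phi_n$; indeed your two terms are, after dividing by $\Phi_n(y)$, exactly the two terms in the paper's final decomposition, with your observation that $m_n(y)$ is a convex combination of the $\mu_{n,x}$ merely making explicit what the paper writes as $\frac{\nabla\Phi_n}{\Phi_n^2}\otimes\sum_x(\mu_{n,x_0}-\mu_{n,x})\rho_n(y-x)$.
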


\begin{proof}
For all $y\in \Omega$, we write
\begin{eqnarray*}
\nabla m_n(y) &=& \nabla \left( \frac1{\Phi_n(y)} \right)\sum_{x\in\cL_{n,\Omega}} \mu_{n,x} \rho_n(y-x)+  \frac1{\Phi_n(y)} \nabla \!\!\sum_{x\in\cL_{n,\Omega}} \mu_{n,x} \rho_n(y-x),\\
&=& -\frac{\nabla \Phi_n(y)}{\Phi_n(y)} \otimes m_n(y) + \frac1{\Phi_n(y)} \sum_{x\in\cL_{n,\Omega}} \nabla \rho_n(y-x) \otimes \mu_{n,x}.
\end{eqnarray*}
Let $x_y \in \cL_{n,\Omega} \cap \supp \rho_n (y-\cdot)$, we can write
\begin{eqnarray*}
\sum_{x\in\cL_{n,\Omega}} \nabla \rho_n(y-x) \otimes \mu_{n,x}
& = & \sum_{x\in\cL_{n,\Omega}} \nabla \rho_n(y-x) \otimes (\mu_{n,x_y}+(\mu_{n,x}-\mu_{n,x_y}))\\
& = & \sum_{x\in\cL_{n,\Omega}} \nabla \rho_n(y-x) \otimes \mu_{n,x_y} \\
&& + \sum_{x\in\cL_{n,\Omega}} \nabla \rho_n(y-x) \otimes (\mu_{n,x}-\mu_{n,x_y})\\
& = & \nabla\Phi_n(y) \otimes \mu_{n,x_y} \\
&& + \sum_{x\in\cL_{n,\Omega}} \nabla \rho_n(y-x) \otimes (\mu_{n,x}-\mu_{n,x_y}).
\end{eqnarray*}
Hence
\begin{eqnarray*}
\nabla m_n(y) 
& = & -\frac{\nabla \Phi_n(y)}{\Phi_n(y)} \otimes m_n(y) +\frac{\nabla \Phi_n(y)}{\Phi_n(y)} \otimes \mu_{n,x_y} \\ 
& + & \left( \frac1{\Phi_n(y)} \right)\sum_{x\in\cL_{n,\Omega}} \nabla \rho_n(y-x) \otimes (\mu_{n,x}-\mu_{n,x_y}) \allowdisplaybreaks \\
& = & -\frac{\nabla \Phi_n(y)}{\Phi_n(y)} \otimes \left(\mu_{n,x_y}+\frac1{\Phi_n(y)}\sum_{x\in\cL_{n,\Omega}}(\mu_{n,x}-\mu_{n,x_y})\rho_n(y-x)\right) \\ 
& & + \frac{\nabla \Phi_n(y)}{\Phi_n(y)} \otimes \mu_{n,x_y} 
+ \left( \frac1{\Phi_n(y)} \right)\sum_{x\in\cL_{n,\Omega}}\nabla \rho_n(y-x) \otimes (\mu_{n,x}-\mu_{n,x_y}) \allowdisplaybreaks \\
& = & \frac{\nabla \Phi_n(y)}{\Phi_n(y)^2} \otimes \left(\sum_{x\in\cL_{n,\Omega}}(\mu_{n,x_y}-\mu_{n,x})\rho_n(y-x)\right) \\
& + & \left( \frac1{\Phi_n(y)} \right)\sum_{x\in\cL_{n,\Omega}}\nabla \rho_n(y-x) \otimes (\mu_{n,x}-\mu_{n,x_y}).
\end{eqnarray*}
In both sums there is only an $O(1)$ number of terms for which $\rho_n$ or $\nabla \rho_n$ is non-zero.
In the first sum, by Hypothesis \ref{hyp_local}, $|\mu_{n,x}-\mu_{n,x_y}| = O(\zeta_n^{1/2}) = O(1/n)$. 
Besides $|\nabla \Phi_n(y)|=O(n)$ and $\Phi_n$ is bounded from below. The first term is therefore $O(1)$.
In the second sum $|\mu_{n,x}-\mu_{n,x_y}| = O(1/n)$ and $\nabla \rho_n(y-x)=O(n)$. 
Hence there exists $C'>0$ such that, for all $y\in \Omega$ and $n\geq n_0$, $|\nabla m_n(y)| \leq C'$.

Finally, $\Omega$ being bounded, hence $(\|\nabla m_n\|_{L^2(\Omega;\bbR^{3d})})_{n\geq n_0}$ is a bounded sequence.
\end{proof}

\subsection{Asymptotics towards a mesoscopic model}

Since $H^1(\Omega;\bbR^3)$ is weakly compact, Proposition \ref{prop_borneH1} implies that 
there exists $m\in H^1(\Omega;\bbR^3)$ such that 
\begin{equation*}
m_n\rightharpoonup m\textrm{ weakly in }H^1(\Omega;\bbR^3).
\end{equation*}

From now on, we have to assume that $\Omega$ is compact (closed) and has a piecewise $\cC^1$ boundary, to ensure that this implies that this convergence is strong in $L^2(\Omega;\bbR^3)$.

\begin{Prop}
Under Hypothesis \ref{hyp_local}, $m$ has a constant norm equal to 1 almost everywhere on $\Omega$. 
\end{Prop}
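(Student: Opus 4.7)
The strategy is to show that $|m_n(y)| \to 1$ uniformly for $y \in \Omega$, and then combine this with the strong $L^2$ convergence $m_n \to m$ (which the text has just invoked as a consequence of the $H^1$ boundedness together with the regularity of $\partial\Omega$). Since $\bigl|\,|m_n|-|m|\,\bigr|\leq|m_n-m|$, strong $L^2$ convergence of $m_n$ to $m$ entails strong $L^2$ convergence of $|m_n|$ to $|m|$; combined with the uniform (hence $L^2$) convergence of $|m_n|$ to the constant $1$, this forces $|m|=1$ in $L^2(\Omega)$, i.e.\ $|m(y)|=1$ almost everywhere.

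\paragraph{The uniform estimate.}
To prove $|m_n(y)|=1+O(1/n)$ uniformly, I would reuse the telescoping trick from the proof of Proposition~\ref{prop_borneH1}. Fix $y\in\Omega$ and $n\geq n_0$. Since $\Phi_n(y)\geq b>0$, the set $\mathcal{L}_{n,\Omega}\cap\supp\rho_n(y-\cdot)$ is non-empty, so one can pick $x_y$ in it. For any other $x$ contributing to the sum, $x,x_y\in\mathcal{L}_{n,\Omega}\cap\mathcal{B}(y,ka/n)$, hence by Hypothesis~\ref{hyp_local} and the polarisation identity
\begin{equation*}
|\mu_{n,x}-\mu_{n,x_y}|^2 = 2\bigl(1-\mu_{n,x}\cdot\mu_{n,x_y}\bigr)\leq 2\zeta_n=O(1/n^2),
\end{equation*}
so $|\mu_{n,x}-\mu_{n,x_y}|=O(1/n)$. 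Writing
\begin{equation*}
m_n(y) = \mu_{n,x_y} + \frac{1}{\Phi_n(y)}\sum_{x\in\cL_{n,\Omega}}(\mu_{n,x}-\mu_{n,x_y})\,\rho_n(y-x),
\end{equation*}
and bounding the sum by $\frac{1}{\Phi_n(y)}\cdot O(1/n)\cdot \Phi_n(y) = O(1/n)$ (using $\rho_n\geq 0$ and $\sum_x\rho_n(y-x)=\Phi_n(y)$), I obtain $|m_n(y)-\mu_{n,x_y}|=O(1/n)$ uniformly in $y$. Since $|\mu_{n,x_y}|=1$, the reverse triangle inequality gives $\bigl|\,|m_n(y)|-1\bigr|=O(1/n)$ uniformly.

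\paragraph{Conclusion and main obstacle.}
The uniform bound passes to $L^2(\Omega)$ because $\Omega$ is bounded, yielding $|m_n|\to 1$ in $L^2$. Coupled with the $L^2$ convergence $|m_n|\to|m|$ inherited from the strong convergence of $m_n$, this proves $|m|=1$ almost everywhere. The argument is essentially a repackaging of the key step already used for the $H^1$ bound, so no fundamentally new ingredient is required; the only point that needs genuine care is ensuring that Hypothesis~\ref{hyp_local} can be applied with centre $y\in\Omega$ to pairs of lattice points lying in $\supp\rho_n(y-\cdot)\subset\mathcal{B}(y,ka/n)$, which is precisely the form in which the hypothesis was stated. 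The only mild subtlety, and arguably the main obstacle to making the proof fully rigorous, is the non-emptiness of $\mathcal{L}_{n,\Omega}\cap\mathcal{B}(y,ka/n)$ for \emph{every} $y\in\Omega$ — guaranteed here by the lower bound $\Phi_n(y)\geq b>0$ obtained from the cone property (Hypothesis~\ref{hyp_cone}).
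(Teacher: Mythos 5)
Your proof is correct and follows essentially the same route as the paper: both arguments use Hypothesis \ref{hyp_local} on the lattice points in $\supp\rho_n(y-\cdot)\subset\cB(y,ka/n)$ to obtain a uniform estimate forcing $|m_n(y)|\to 1$, and then pass to the limit via the strong $L^2(\Omega;\bbR^3)$ convergence of $m_n$ to $m$. The only (cosmetic) difference is that the paper expands $|m_n(y)|^2$ as a double sum weighted by $\rho_n(y-x)\rho_n(y-x')$ and bounds the dot products to get $1-\zeta_n\leq|m_n(y)|^2\leq 1$ directly, whereas you compare $m_n(y)$ to a reference spin $\mu_{n,x_y}$; both hinge on the same alignment estimate.
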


\begin{Rmk}
We recover here a property of the magnetization field in Brown's model of micromagnetism \cite{Brown63}, where the constant norm is assumed.
\end{Rmk}

\begin{proof}

We have seen that for all $n\geq n_0$ and $y\in \Omega$, $|m_n(y)| \leq 1$.
We want to show that it is also bounded from below and pass to the limit.
For all $y\in\Omega$, since $|\mu_{n,x}|^2=1$,
\begin{eqnarray*} 
|m_n(y)|^2 & = & \left| \frac1{\Phi_n(y) }\sum_{x\in\cL_{n,\Omega}}\mu_{n,x}\rho_n(y-x)\right|^2 \\
& = & \frac1{\Phi_n(y)^2} \sum_{x,x'\in\cL_{n,\Omega}}\mu_{n,x}\cdot\mu_{n,x'}\ \rho_n(y-x)\rho_n(y-x').
\end{eqnarray*}
Since $\supp\rho_n\subset\cB(0,ka/n)$, the sum runs indeed on $\cL_{n,\Omega}\cap\cB(y,ka/n)$:
\begin{equation*} 
|m_n(y)|^2 = \frac1{\Phi_n(y)^2} \sum_{x,x'\in\cL_{n,\Omega}\cap\cB(y,\frac{ka}n)}\mu_{n,x}\cdot\mu_{n,x'}\ \rho_n(y-x)\rho_n(y-x').
\end{equation*}
Hypothesis \ref{hyp_local} implies that
\begin{equation*}
\forall x,x'\in\cL_{n,\Omega}\cap\cB\left(y,\frac{ka}n\right),\; 1-\zeta_n\leq\mu_{n,x}\cdot\mu_{n,x'}\leq1.
\end{equation*}
and we also have that 
\begin{equation*}
\forall y\in \Omega,\ \sum_{x,x'\in\cL_{n,\Omega}}\rho_n(y-x)\rho_n(y-x') = \Phi_n(y)^2,
\end{equation*}
therefore
\begin{eqnarray*}
\frac1{\Phi_n(y)^2}( 1-\zeta_n)  \Phi_n(y)^2 \leq &|m_n(y)|^2&\leq \frac1{\Phi_n(y)^2} \Phi_n(y)^2, \\
1-\zeta_n  \leq &|m_n(y)|^2 &\leq 1.
\end{eqnarray*}
In the $L^2(\Omega;\bbR^3)$ limit, we therefore have $|m|=\chi_{\Omega}$ almost everywhere.
\end{proof}

\section{Towards the exchange energy}
\label{sec_exchange}

\subsection{Heisenberg interaction}
\label{sec_Heisenberg}

The interaction of the spins is described by the Heisenberg energy, which can be written as follows:
\begin{equation*}
e_{x,y}=-\frac12 A_{x,y}(\mu_x \cdot \mu_y-1),
\end{equation*}
where $A_{x,y}\geq0$ only depends on the distance between $x$ and $y$. We make the assumption that each node $x$ only interacts with its neighbors $N_x$.

Let $N_0\subset\cL$ be the set of neighboring nodes in interaction with the node $(0,0,0)$ \textit{via} the Heisenberg energy.
In the shrunk lattice, we will restrict the computation of this energy to elements $x$ and $y$ in $\cL_{n,\Omega}$.
Since the lattice is homogeneous, the neighbors of any given node $x\in\cL_{n,\Omega}$ can be deduced from the definition of $N_0$: $N_{n,\Omega,x}=(\{x\}+h_n^{-1}(N_0))\cap \Omega$.
We also assume that
(a) $N_0$ (and hence $N_{n,\Omega,x}$) is a finite set;
(b) $N_0\subset\cB(0,ka)$ (i.e. $N_{n,\Omega,x}\subset\cB(x,ka/n)$).

We therefore define the node energy by
\begin{equation*}
e_x = \sum_{y\in N_x}e_{x,y} 
= -\frac12\sum_{y\in N_x} A_{x,y}(\mu_x\cdot\mu_y-1) .
\end{equation*}

In what follows we will restrict the study to 3D cubic lattices. 
Dimensions 1 and 2 are also possible to treat in the same way. 
The fact that the lattice is cubic is used in the explicit computations, but our proof may be extended to other classes of regular lattices.
We also consider as neighbors only the 6 closest ones on the cubic lattice, which are at the same distance. Since $A_{x,y}$ only depends on the distance, which is now equal for all the neighbors, we can set 
\begin{equation*}
A_{x,y} = \begin{cases}
A>0, & \textrm{ if } y\in N_x,\\
0, & \textrm{ else.}
\end{cases}
\end{equation*}

For all $n\in\bbN^*$, we define the exchange energy of the domain $\Omega$ associated to the spin distribution $\mu_n$ summing up the node energies 
\begin{equation*}
\cE_{n,\rmex}(\mu_n) = \sum_{x\in\cL_{n,\Omega}} e_x 
= \frac an \sum_{x\in\cL_{n,\Omega}} \sum_{y\in N_{n,\Omega,x}} A |\mu_{n,y}-\mu_{n,x}|^2 .
\end{equation*}

\subsection{Spaces and convergence}

We define the space sequence $(W_n)_{n\in\bbN^*}$ by $W_n = \left(\bbR^3\right)^{\cL_{n,\Omega}}$. In the sequel, we set $W=\prod_{n\in\bbN^*} W_n$.

\begin{Hyp}
\label{hyp_local2}
Let $\bmmu = (\mu_n)_{n\in\bbN^*} \in  \prod_{n\in\bbN^*} W_n$.  
There exists $c>0$ such that 
\begin{equation*}
\forall n \in\bbN^*,\ \forall x\in \cL_{n,\Omega},\ \forall y\in N_{n,\Omega,x},
\hspace{1cm} 
|\mu_{n,x}-\mu_{n,y}|^2 \leq \frac{c}{n^2},
\end{equation*} 
except possibly for a subset $l_{n,\Omega}\subset\cL_{n,\Omega}$ such that $\#(l_{n,\Omega}) = O(n)$, and there exists a sequence $(c_n)_{n\geq 1} \in \bbR^{\bbN^*}$ such that $c_n \xrightarrow[n\to\infty]{} 0$ and 
\begin{equation*}
\forall n \in\bbN^*,\ \forall x\in l_{n,\Omega},\ \forall y\in N_{n,\Omega,x},\ |\mu_{n,x}-\mu_{n,y}|^2 \leq c_n.
\end{equation*}
\end{Hyp}

More precisely, given the constant $c$, we can state that $x\in l_{n,\Omega}$ if there exists $y\in N_{n,\Omega,x}$ such that $|\mu_{n,x}-\mu_{n,y}|^2 > c/n^2$.
Hypothesis \ref{hyp_local2} is a little weaker than Hypothesis \ref{hyp_local}. 

We denote by $\gH\subset W$ the set of $\bmmu=(\mu_n)_{n\in\bbN^*}$ for which Hypothesis \ref{hyp_local2} holds and $\gH_1 \subset \gH$ the set of $\bmmu\in\gH$ for which $|\mu_{n,x}|=1$, $\forall n\in\bbN^*,\ \forall x\in \cL_{n,\Omega}$.

We want to define the convergence of elements of $W$ towards elements of the limit space $H^1(\Omega;\bbR^3)$.
To this aim we define a partition of the domain $\Omega\subset\bbR^3$ in tetrahedra, by groups of 5 tetrahedra (see Figure \ref{fig_tetra} and Section \ref{sec_elements}). 

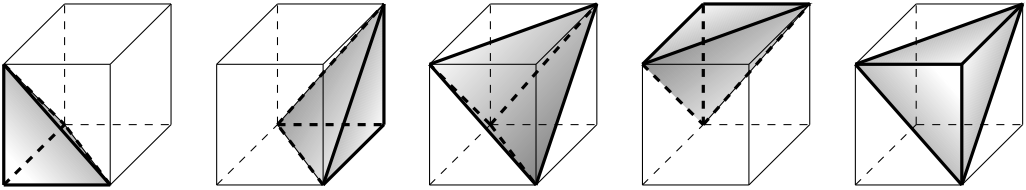
\begin{figure}[H]
\begin{center}
\begin{tikzpicture}[x=1.4cm,y=1.6cm,z=-.8cm]
\foreach \x in {0}
{
\shadedraw[shading=axis,shading angle=-45](\x,0) -- (\x+1,0) -- (\x,1);
\shadedraw[shading=axis,shading angle=-45](\x,1) -- (\x,0,-1) -- (\x+1,0);
\draw[very thick](\x+1,0) -- (\x,1);
\draw[very thick,dashed](\x+1,0) -- (\x,0,-1);
\draw[very thick,dashed](\x,1) -- (\x,0,-1);
\draw[very thick](\x,0) -- (\x+1,0);
\draw[very thick](\x,0) -- (\x,1);
\draw(\x,1) -- (\x+1,1);
\draw(\x+1,0) -- (\x+1,1);
\draw(\x,1) -- (\x,1,-1);
\draw(\x,1,-1) -- (\x+1,1,-1);
\draw(\x+1,1) -- (\x+1,1,-1);
\draw(\x+1,0) -- (\x+1,0,-1);
\draw(\x+1,0,-1) -- (\x+1,1,-1);
\draw[very thick,dashed](\x,0) -- (\x,0,-1);
\draw[dashed](\x,0,-1) -- (\x+1,0,-1);
\draw[dashed](\x,0,-1) -- (\x,1,-1);
}
\foreach \x in {2}
{
\shadedraw[shading=axis,shading angle=45](\x+1,0) -- (\x+1,0,-1) -- (\x+1,1,-1);
\shadedraw[shading=axis,shading angle=45](\x+1,0) -- (\x,0,-1) -- (\x+1,1,-1);
\draw[very thick](\x+1,0) -- (\x+1,1,-1);
\draw[very thick,dashed](\x+1,0) -- (\x,0,-1);
\draw[very thick,dashed](\x,0,-1) -- (\x+1,1,-1);
\draw(\x,0) -- (\x+1,0);
\draw(\x,0) -- (\x,1);
\draw(\x,1) -- (\x+1,1);
\draw(\x+1,0) -- (\x+1,1);
\draw(\x,1) -- (\x,1,-1);
\draw(\x,1,-1) -- (\x+1,1,-1);
\draw(\x+1,1) -- (\x+1,1,-1);
\draw[very thick](\x+1,0) -- (\x+1,0,-1);
\draw[very thick](\x+1,0,-1) -- (\x+1,1,-1);
\draw[dashed](\x,0) -- (\x,0,-1);
\draw[very thick,dashed](\x,0,-1) -- (\x+1,0,-1);
\draw[dashed](\x,0,-1) -- (\x,1,-1);
}
\foreach \x in {4}
{
\shadedraw[shading=axis,shading angle=-135](\x,0,-1) -- (\x,1) -- (\x+1,0);
\shadedraw[shading=axis,shading angle=-135](\x,1) -- (\x+1,1,-1) -- (\x+1,0);
\draw[very thick](\x+1,0) -- (\x+1,1,-1);
\draw[very thick](\x+1,0) -- (\x,1);
\draw[very thick](\x,1) -- (\x+1,1,-1);
\draw[very thick,dashed](\x,0,-1) -- (\x+1,1,-1);
\draw[very thick,dashed](\x,0,-1) -- (\x,1);
\draw[very thick,dashed](\x,0,-1) -- (\x+1,0);
\draw(\x,0) -- (\x+1,0);
\draw(\x,0) -- (\x,1);
\draw(\x,1) -- (\x+1,1);
\draw(\x+1,0) -- (\x+1,1);
\draw(\x,1) -- (\x,1,-1);
\draw(\x,1,-1) -- (\x+1,1,-1);
\draw(\x+1,1) -- (\x+1,1,-1);
\draw(\x+1,0) -- (\x+1,0,-1);
\draw(\x+1,0,-1) -- (\x+1,1,-1);
\draw[dashed](\x,0) -- (\x,0,-1);
\draw[dashed](\x,0,-1) -- (\x+1,0,-1);
\draw[dashed](\x,0,-1) -- (\x,1,-1);
}
\foreach \x in {6}
{
\shadedraw[shading=axis,shading angle=-135](\x,1,-1) -- (\x+1,1,-1) -- (\x,1);
\shadedraw[shading=axis,shading angle=45](\x,0,-1) -- (\x+1,1,-1) -- (\x,1);
\draw[very thick,dashed](\x,0,-1) -- (\x+1,1,-1);
\draw[very thick,dashed](\x,1) -- (\x,0,-1);
\draw[very thick](\x,1) -- (\x+1,1,-1);
\draw(\x,0) -- (\x+1,0);
\draw(\x,0) -- (\x,1);
\draw(\x,1) -- (\x+1,1);
\draw(\x+1,0) -- (\x+1,1);
\draw[very thick](\x,1) -- (\x,1,-1);
\draw[very thick](\x,1,-1) -- (\x+1,1,-1);
\draw(\x+1,1) -- (\x+1,1,-1);
\draw(\x+1,0) -- (\x+1,0,-1);
\draw(\x+1,0,-1) -- (\x+1,1,-1);
\draw[dashed](\x,0) -- (\x,0,-1);
\draw[dashed](\x,0,-1) -- (\x+1,0,-1);
\draw[very thick,dashed](\x,0,-1) -- (\x,1,-1);
}
\foreach \x in {8}
{
\shadedraw[shading=axis,shading angle=-135](\x+1,0) -- (\x+1,1) -- (\x+1,1,-1);
\shadedraw[shading=axis,shading angle=45](\x+1,1) -- (\x+1,1,-1) -- (\x,1);
\shadedraw[shading=axis,shading angle=135](\x+1,0) -- (\x+1,1) -- (\x,1);
\draw[very thick](\x+1,0) -- (\x+1,1,-1);
\draw[very thick](\x+1,0) -- (\x,1);
\draw[very thick](\x,1) -- (\x+1,1,-1);
\draw(\x,0) -- (\x+1,0);
\draw(\x,0) -- (\x,1);
\draw[very thick](\x,1) -- (\x+1,1);
\draw[very thick](\x+1,0) -- (\x+1,1);
\draw(\x,1) -- (\x,1,-1);
\draw(\x,1,-1) -- (\x+1,1,-1);
\draw[very thick](\x+1,1) -- (\x+1,1,-1);
\draw(\x+1,0) -- (\x+1,0,-1);
\draw(\x+1,0,-1) -- (\x+1,1,-1);
\draw[dashed](\x,0) -- (\x,0,-1);
\draw[dashed](\x,0,-1) -- (\x+1,0,-1);
\draw[dashed](\x,0,-1) -- (\x,1,-1);
}
\end{tikzpicture}

\caption{\label{fig_tetra}Partition of a single 3D lattice cell. These five tetrahedra are elements $\cT_n$, and $T_n$ for the corner tetrahedra, and $T_n^*$ for the center tetrahedron (see Section \ref{sec_elements}).}
\end{center}
\end{figure}

\begin{Def}
\label{def_Pn}
We define the projection $P_n: W_n \longrightarrow H^1(\Omega;\bbR^3)$ such that $P_n(\mu_n)$ is equal to $\mu_n$ on the lattice nodes and linear on each element of the partition. 
\end{Def}

\begin{Def}
\label{def_converge}
We say that $\bmmu\in W$ converges to $m\in H^1(\Omega;\bbR^3)$ if $P_n(\mu_n)$ converges weakly in $H^1(\Omega;\bbR^3)$ to $m$ as $n\to\infty$. 
We denote this $\mu_n \xrightarrow[n\to\infty]{}m$.
\end{Def}

\begin{Def}
We define the projection $p_n: \cC(\Omega;\bbR^3) \longrightarrow W_n$, such that for all $m\in\cC(\Omega;\bbR^3)$, $p_n(m)\in W_n$ and for all $x\in\cL_{n,\Omega}$, $(p_n(m))_x = m(x)$.
\end{Def}

\subsection{Main result}

The exchange energy $\cE_{n,\rmex}$ is a functional defined on $W_n$. 
The main result of this paper is the following.

\begin{Th}
\label{th_main}
Let $(\mu_n)_{n\in\bbN^*}\in\gH$. In the sense of the topology defined by Definition \ref{def_converge},
\begin{equation*}
\cE_{n,\rmex} \xrightarrow[n\to\infty]{\Gamma} \cE_{\infty,\rmex},
\end{equation*}
where
\begin{equation*}
\cE_{\infty,\rmex}: \begin{array}[t]{rcl}
H^1(\Omega;\bbR^3) & \longrightarrow & \bbR_+ \\
m & \longmapsto & \displaystyle 2 A \int_{\Omega}|\nabla m(x)|^2\d x.
\end{array}
\end{equation*}
Moreover, this convergence is compatible with the unit norm constraint.
\end{Th}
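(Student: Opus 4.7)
The plan is to establish the stated $\Gamma$-convergence in two steps: a liminf inequality along an arbitrary converging sequence, then construction of a recovery sequence. Write $h:=a/n$. The key reformulation is a near-identity between $\cE_{n,\rmex}$ and an $L^2$ norm of a discrete gradient: for $x\in\cL_{n,\Omega}$ with $x+he_i\in\cL_{n,\Omega}$ set $D_i\mu_n(x):=(\mu_{n,x+he_i}-\mu_{n,x})/h$, assemble $D_1\mu_n,D_2\mu_n,D_3\mu_n$ as rows of a matrix $\nabla_n\mu_n(x)\in\bbR^{3\times 3}$, and extend $\nabla_n\mu_n$ piecewise constantly on the elementary cubes. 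Each axis edge contributes twice to the double sum defining $\cE_{n,\rmex}$ and once as a forward difference, so a direct counting yields the essentially exact identity
\begin{equation*}
\cE_{n,\rmex}(\mu_n)=2A\int_\Omega|\nabla_n\mu_n(y)|^2\,\d y+o(1),
\end{equation*}
the $o(1)$ absorbing the $O(n^2)$ boundary cubes whose forward neighbors leave $\Omega$.

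For the liminf inequality, let $\bmmu\in\gH$ with $P_n(\mu_n)\rightharpoonup m$ weakly in $H^1(\Omega;\bbR^3)$. Hypothesis \ref{hyp_local2} forces $|D_i\mu_n|^2\leq c/a^2$ outside $l_{n,\Omega}$, and the exceptional cubes contribute $O(h\cdot\#l_{n,\Omega}\cdot c_n)=O(c_n)=o(1)$; hence $(\nabla_n\mu_n)$ is bounded in $L^2(\Omega;\bbR^{3\times 3})$. To identify its weak limit, pair with $\Phi\in C_c^\infty(\Omega;\bbR^{3\times 3})$: a discrete summation by parts rewrites $\int\nabla_n\mu_n:\Phi\,\d y$ as $-\int\tilde\mu_n\cdot\Psi_n\,\d y+o(1)$, where $\tilde\mu_n$ is the piecewise constant extension of $\mu_n$ and $\Psi_n(y):=\sum_i(\Phi_i(y)-\Phi_i(y-he_i))/h$ is a backward discrete row-divergence of $\Phi$ converging uniformly to $\div\Phi$. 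Rellich compactness gives $P_n(\mu_n)\to m$ strongly in $L^2$, and Hypothesis \ref{hyp_local2} implies $\|\tilde\mu_n-P_n(\mu_n)\|_{L^2}\to 0$ (values at the vertices of a single cube differ by $O(1/n)$, with the $l_{n,\Omega}$ contribution absorbed into an $o(1)$ term), so $\tilde\mu_n\to m$ in $L^2$. Passing to the limit yields $\nabla_n\mu_n\rightharpoonup\nabla m$ weakly in $L^2$; weak lower semicontinuity of the squared $L^2$-norm then delivers
\begin{equation*}
\liminf_n\cE_{n,\rmex}(\mu_n)\geq 2A\int_\Omega|\nabla m|^2\,\d x.
\end{equation*}

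For the recovery sequence, when $m\in C^\infty(\overline\Omega;\bbR^3)$ set $\mu_{n,x}:=m(x)$: Taylor expansion shows $\bmmu$ satisfies even the stronger Hypothesis \ref{hyp_local}, a Riemann-sum computation gives $\cE_{n,\rmex}(\mu_n)\to 2A\int_\Omega|\nabla m|^2$, and $P_n(\mu_n)$ is the standard $P_1$ interpolant of $m$, converging strongly to $m$ in $H^1$. For general $m\in H^1(\Omega;\bbR^3)$, approximate by smooth $(m^{(k)})$ with $m^{(k)}\to m$ in $H^1$ and extract a diagonal subsequence. When $|m|=1$ a.e., the $m^{(k)}$ can be chosen with $|m^{(k)}|=1$ pointwise (density of $C^\infty(\overline\Omega;\bbS^2)$ in $H^1(\Omega;\bbS^2)$ is classical in the present dimension), yielding $\bmmu\in\gH_1$ and the announced unit-norm compatibility. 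The delicate point throughout is the weak convergence $\nabla_n\mu_n\rightharpoonup\nabla m$: one must carefully handle boundary effects in the summation by parts (the forward-difference stencil reaches slightly past $\supp\Phi$), the exceptional set $l_{n,\Omega}$ where $|\nabla_n\mu_n|^2$ may reach $c_n(n/a)^2$ and is tamed only by the cardinality bound $\#l_{n,\Omega}=O(n)$, and the $L^2$ approximation $\tilde\mu_n-P_n(\mu_n)\to 0$ under the weaker Hypothesis \ref{hyp_local2} rather than \ref{hyp_local}.
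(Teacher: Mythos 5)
Your liminf argument takes a genuinely different route from the paper's, namely the standard discrete-to-continuum one. The paper never introduces a piecewise-constant discrete gradient or a summation by parts against test functions: it computes $\|\nabla P_n(\mu_n)\|^2_{L^2}$ exactly on the tetrahedral decomposition to get the identity $\|\nabla P_n(\mu_n)\|^2_{L^2}=\tfrac{1}{2A}\cE_{n,\rmex}(\mu_n)+\alpha_n(\mu_n)$, approximates the weak limit $m_\infty$ by a $\cC^1$ function off a set of measure $\eps$ (a Lusin-type theorem quoted from Ziemer), and then compares $\cE^\eps_{n,\rmex}(\mu_n)$ with $\cE^\eps_{n,\rmex}(p_n(m_\infty))$ through the term-by-term estimates $a_n,\dots,f_n$. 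Your version buys a cleaner identification of the limit ($\nabla_n\mu_n\rightharpoonup\nabla m$ in $L^2$ plus weak lower semicontinuity of the norm) and dispenses with the $\cC^1$ approximation altogether; the edge-counting identity, the $L^2$ bound on $\nabla_n\mu_n$ including the $l_{n,\Omega}$ correction, and the estimate $\|\tilde\mu_n-P_n(\mu_n)\|_{L^2}\to 0$ are all sound. The recovery sequence by pointwise sampling of smooth data is essentially the paper's Lemma \ref{lem_construct}.

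There is, however, a genuine gap in your treatment of the unit-norm compatibility: $\cC^\infty(\overline\Omega;\bbS^2)$ is \emph{not} dense in $H^1(\Omega;\bbS^2)$ for a three-dimensional $\Omega$. By the Bethuel--Zheng theorem, the hedgehog $m(x)=(x-x_0)/|x-x_0|$ with $x_0\in\Omega$ cannot be approximated strongly in $H^1$ by smooth $\bbS^2$-valued maps: the distributional Jacobian built from the $L^1$ vector field $\left(m\cdot(\partial_2 m\times\partial_3 m),\,m\cdot(\partial_3 m\times\partial_1 m),\,m\cdot(\partial_1 m\times\partial_2 m)\right)$ depends continuously on $m$ in the strong $H^1$ topology, vanishes for smooth maps, and equals $4\pi\delta_{x_0}$ for the hedgehog. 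This is exactly the vortex configuration that the introduction insists must survive the limit, so the unit-norm construction cannot proceed through smooth sphere-valued approximants. What is dense is the class of maps smooth away from finitely many points, but then $\|\nabla m^{(k)}\|_{L^\infty}=\infty$ and the sampled sequences are not obviously in $\gH_1$ with a uniform constant $c$ (a uniformity issue that already affects your diagonal argument for general $m\in H^1$, since the paper's construction statement only requires $\bmmu\in W$ whereas you claim $\bmmu\in\gH_1$). The paper avoids the obstruction by never approximating in the strong $H^1$ topology of sphere-valued maps, modifying $m_\infty$ only on a set of small measure, which is insensitive to the topological degree; to repair your argument you would need either that device or explicit control of the sampled energy near the point singularities.
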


\begin{Rmk}
The $\Gamma$-convergence result is two-fold \cite{Braides02}: 
\begin{description}
\item[construction:] for all $m\in H^1(\Omega;\bbR^3)$, there exists $\bmmu\in W$ such that $\mu_n\to m$ and $\limsup_{n\to\infty} \cE_{n,\rmex}(\mu_n) \leq  \cE_{\infty,\rmex}(m)$.
\item[lower semi-continuity:] for all $\bmmu\in W$, such that $\|\mu_n\| \leq B$, $\mu_n\to m_\infty\in H^1(\Omega;\bbR^3)$ and $\liminf_{n\to\infty} \cE_{n,\rmex}(\mu_n) \geq  \cE_{\infty,\rmex}(m_\infty)$.
\end{description}
\end{Rmk}

\begin{proof}
The proof of Theorem \ref{th_main} splits into many steps to which various lemmas are devoted, the technical proofs of which are postponed.
 
\begin{Lem}
\label{lem_converge}
Let $(\mu_n)_{n\in\bbN^*}\in\gH$. Then there exists $m_\infty\in H^1(\Omega;\bbR^3)$ such that $\mu_n \xrightarrow[n\to\infty]{}m_\infty$.
\end{Lem}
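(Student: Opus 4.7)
The plan is to prove that the interpolated sequence $(P_n(\mu_n))_{n\geq 1}$ is bounded in $H^1(\Omega;\bbR^3)$; the weak sequential compactness of this reflexive Hilbert space then delivers a subsequence and a limit $m_\infty$, which by Definition \ref{def_converge} is exactly the convergence claimed. Two uniform-in-$n$ estimates are required: a bound on $\|P_n(\mu_n)\|_{L^2(\Omega;\bbR^3)}$ and a bound on $\|\nabla P_n(\mu_n)\|_{L^2(\Omega;\bbR^{3\times 3})}$.

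\textbf{$L^2$ estimate.} The physically meaningful setting is $\bmmu\in\gH_1\subset\gH$, for which $|\mu_{n,x}|=1$ at every node. Since $P_n(\mu_n)$ is the piecewise affine interpolant of unit vectors on each tetrahedron of the partition, convexity of the unit ball yields $\|P_n(\mu_n)\|_{L^\infty(\Omega;\bbR^3)}\leq 1$, whence $\|P_n(\mu_n)\|_{L^2(\Omega;\bbR^3)}\leq\sqrt{|\Omega|}$.

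\textbf{Gradient estimate.} On each tetrahedron $T$ of the cubic-cell partition (edge length $\leq\sqrt{2}a/n$, volume $\Theta((a/n)^3)$), the interpolant $P_n(\mu_n)$ is affine, so a direct linear-algebra bound yields
\begin{equation*}
|\nabla P_n(\mu_n)|_T \leq C\,\frac{n}{a}\,\max_{v,v'\text{ vertices of }T}|\mu_{n,v}-\mu_{n,v'}|.
\end{equation*}
I would split the tetrahedra into two classes: \emph{regular} ones (all four vertices in $\cL_{n,\Omega}\setminus l_{n,\Omega}$) and \emph{defective} ones (at least one vertex in $l_{n,\Omega}$). In the regular case, any vertex pair is either a Heisenberg neighbor (bound $c/n^2$ by Hypothesis \ref{hyp_local2}) or a face-diagonal pair joined by a two-step nearest-neighbor chain inside $\cL_{n,\Omega}\setminus l_{n,\Omega}$, so the triangle inequality gives $\max|\mu_{n,v}-\mu_{n,v'}|=O(1/n)$ and hence $|\nabla P_n(\mu_n)|_T=O(1)$. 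Summing over the $O(n^3)$ regular tetrahedra, whose total volume is at most $|\Omega|$, produces an $O(1)$ contribution to $\|\nabla P_n(\mu_n)\|_{L^2}^2$. In the defective case the same triangle-inequality argument through the defect vertex yields the weaker bound $|\nabla P_n(\mu_n)|_T=O(n\sqrt{c_n})$; since each defect node belongs to $O(1)$ tetrahedra, the number of defective tetrahedra is $O(\#l_{n,\Omega})=O(n)$, and the total contribution is $O(n)\cdot O(n^{-3})\cdot O(n^2c_n)=O(c_n)\xrightarrow[n\to\infty]{}0$.

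\textbf{Conclusion and main obstacle.} Combining the two estimates, $\|P_n(\mu_n)\|_{H^1(\Omega;\bbR^3)}$ is bounded uniformly in $n$, and weak $H^1$-compactness furnishes a subsequential limit $m_\infty\in H^1(\Omega;\bbR^3)$, which is the claimed convergence in the sense of Definition \ref{def_converge}. The main technical nuisance I anticipate is the treatment of the partition near $\partial\Omega$: because $\cL_{n,\Omega}=\cL_n\cap\Omega$, cubic cells straddling the boundary are only partially populated by nodes, so one must either extend the interpolation consistently by using the uniform cone property (Hypothesis \ref{hyp_cone}) or check that the boundary layer of thickness $O(1/n)$ contributes negligibly to both the $L^2$ and gradient norms. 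A secondary subtlety is the verification that between any two face-diagonal vertices of a regular tetrahedron there exists an intermediate nearest-neighbor that is itself regular; failure is confined to clusters of defect nodes, and their effect is absorbed into the defective-tetrahedron count up to a harmless multiplicative constant.
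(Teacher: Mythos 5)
Your proposal is correct and follows essentially the same route as the paper: both bound $\|P_n(\mu_n)\|_{L^2}$ by the unit-norm/convexity argument and control $\|\nabla P_n(\mu_n)\|_{L^2}$ tetrahedron by tetrahedron through nearest-neighbor differences (using Hypothesis \ref{hyp_local2}, the regular/defect splitting of nodes, and the two-step chain $\mu_{n,x_i}-\mu_{n,x_k}+\mu_{n,x_k}-\mu_{n,x_j}$ for the face diagonals of the center tetrahedra), then invoke weak $H^1$ compactness; the boundary layer you flag is handled in the paper by setting $P_n(\mu_n)=0$ on $\Omega\setminus\Omega_n$ with $\lambda(\Omega\setminus\Omega_n)=O(1/n)$ and tracking a surface correction $\cS_{n,\Omega}$. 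The only substantive difference is that the paper computes $\int_\Omega|\nabla P_n(\mu_n)|^2$ as an exact identity in terms of $\cE_{n,\rmex}(\mu_n)$ plus a remainder $\alpha_n(\mu_n)$ (a by-product reused later in the lower semi-continuity step), whereas you content yourself with an upper bound, which suffices for this lemma.
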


Now this function $m_\infty$ is only in $H^1(\Omega;\bbR^3)$, and not continuous, and we need to use pointwise values of this function.
Therefore, for all $\eps$, we approximate $m_\infty$ by a $\cC^1(\Omega;\bbR^3)$ which coincides with $m_\infty$ on a smaller domain $\Omega_\eps$. We denote by $\lambda$ the Lebesgue measure on $\bbR^d$.

\begin{Lem}
\label{lem_approx}
Let $u \in H^1(\Omega;\bbR^3)$, for all $\eps>0$, there exists an open set $\omega_{u,\eps}$ and a function $u_\eps \in  \cC^1(\Omega;\bbR^3)$ such that for all $x\in  \Omega\setminus\omega_{u,\eps},\ u_\eps(x)=u(x)$, $\lambda(\omega_{u,\eps})<\eps$, and 
\begin{equation*}
\lim_{n\to\infty} \frac{\#(\cL_{n,\Omega}\cap \omega_{u,\eps})}{\#(\cL_{n,\Omega})} < \eps.
\end{equation*}
\end{Lem}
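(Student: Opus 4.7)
The plan is to invoke a Lusin-type approximation theorem for Sobolev functions, which produces a $\cC^1$ representative that agrees with $u$ outside a set of arbitrarily small measure, and then to inflate the exceptional set into an open set whose boundary is regular enough so that the lattice-count ratio behaves like the volume ratio.

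First, I would invoke the classical Lusin approximation for $H^1$ functions (see e.g.\ Evans--Gariepy or Federer--Ziemer): given $u\in H^1(\Omega;\bbR^3)$, for every $\eta>0$ there exist a closed set $F_\eta\subset\Omega$ with $\lambda(\Omega\setminus F_\eta)<\eta$ and a function $v_\eta\in \cC^1(\bbR^d;\bbR^3)$ such that $v_\eta\equiv u$ on $F_\eta$. The restriction of $v_\eta$ to $\Omega$ is then a candidate for $u_\eps$, provided the exceptional set can be contained in an open set with the right properties.

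Second, I would enlarge $\Omega\setminus F_\eta$ to a well-behaved open set. By outer regularity of the Lebesgue measure one finds an open $U\supset\Omega\setminus F_\eta$ with $\lambda(U)<2\eta$; but for the lattice-count argument I would pick more constructively a finite union of open dyadic cubes covering $\Omega\setminus F_\eta$ with total measure still $<2\eta$ (easy since $\Omega$ is bounded and $\Omega\setminus F_\eta$ has small measure). Call this open set $\omega_{u,\eps}$ (intersected with $\Omega$ if necessary). By construction its boundary has Lebesgue measure zero, and $\Omega\setminus\omega_{u,\eps}\subset F_\eta$, so setting $u_\eps:=v_\eta$ gives $u_\eps=u$ on $\Omega\setminus\omega_{u,\eps}$.

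Third, I would control the lattice ratio by equidistribution of the cubic lattice $\cL_n=(a/n)\bbZ^d$. For any Jordan-measurable set $A\subset\bbR^d$ (i.e.\ $\lambda(\partial A)=0$), the standard counting estimate gives
\begin{equation*}
\lim_{n\to\infty}\frac{\#(\cL_n\cap A)}{n^d}=\frac{\lambda(A)}{a^d}.
\end{equation*}
Applied to $A=\omega_{u,\eps}$ and compared with the assumption $\#\cL_{n,\Omega}=Cn^d(1+O(1/n))$ with $C=\lambda(\Omega)/a^d$, this yields
\begin{equation*}
\lim_{n\to\infty}\frac{\#(\cL_{n,\Omega}\cap \omega_{u,\eps})}{\#\cL_{n,\Omega}}=\frac{\lambda(\omega_{u,\eps})}{\lambda(\Omega)}.
\end{equation*}
It remains to fix the parameters: choosing $\eta$ small enough so that $\lambda(\omega_{u,\eps})<2\eta<\eps\min(1,\lambda(\Omega))$ ensures simultaneously $\lambda(\omega_{u,\eps})<\eps$ and the limit above being strictly less than $\eps$.

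The main obstacle, and the only non-cosmetic step, is the Lusin-type approximation itself; once this is taken as a black box, the rest is bookkeeping to guarantee that the boundary of $\omega_{u,\eps}$ is regular enough (zero Lebesgue measure) for the Riemann/Jordan equidistribution of lattice points to apply, and that a single choice of $\eta$ controls both the measure of $\omega_{u,\eps}$ and the asymptotic fraction of lattice nodes it contains.
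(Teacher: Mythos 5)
Your overall strategy is the one the paper follows: the Lusin-type $\cC^1$ approximation you quote is exactly the Ziemer theorem (Theorem 3.11.6 of \cite{Ziemer89}) invoked in the paper, and your final step --- equidistribution of the shrinking lattice over a Jordan-measurable set, i.e. $\#(\cL_{n,\Omega}\cap A)/\#(\cL_{n,\Omega})\to\lambda(A\cap\Omega)/\lambda(\Omega)$ when $\lambda(\partial A)=0$ --- is also how the paper converts the measure bound into the node-count bound. The parameter bookkeeping at the end is fine.

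There is, however, a genuine gap in your second step, and it sits precisely at the crux of the lemma. You claim it is ``easy'' to cover $\Omega\setminus F_\eta$ by a \emph{finite} union of open dyadic cubes of total measure $<2\eta$ because $\Omega$ is bounded and $\lambda(\Omega\setminus F_\eta)<\eta$. This is false in general: $\Omega\setminus F_\eta$ is an open set of small measure, but it may be \emph{dense} in $\Omega$ (this happens for $H^1$ functions that agree a.e.\ with no $\cC^1$ function on any ball, e.g.\ $u(x)=\sum_k 2^{-k}|x-p_k|^{-\alpha}$ with $\{p_k\}$ dense and $0<\alpha<1/2$). If $V$ is dense in $\Omega$ and $W=\bigcup_{i=1}^N Q_i$ is a finite union of open cubes with $V\subset W$, then $\overline{W}\supset\overline{V}\supset\Omega$, and since $\lambda(\partial W)=0$ this forces $\lambda(W)\geq\lambda(\Omega)$; so no finite union of cubes of small total measure can cover such a set, and outer regularity alone does not produce a Jordan-measurable enlargement either. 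The paper confronts exactly this difficulty by a different device: it introduces the singular set $X_u$ of $u$, argues that its \emph{closure} $\overline{X_u}$ is compact with small measure, covers it by countably many balls of small total content, and extracts a finite subcover by compactness --- the finite union of balls is then Jordan measurable, and $u_\eps$ is chosen so that $\omega_{u,\eps}$ sits inside it. Whatever one thinks of the paper's justification that $\lambda(\overline{X_u})$ is small, some argument of this kind (controlling the closure of the exceptional set, not just its measure) is indispensable, and it is exactly the step your proposal elides.
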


\begin{Cor}
Let $u \in H^1(\Omega,\bbR^3)$; for all $\eps>0$, there exists an open set $\Omega_{u,\eps}\subset\Omega$ with piecewise $\cC^1$ boundary such that $\lambda(\Omega\setminus\Omega_{u,\eps})\leq\eps$ and
\begin{equation*}
u_{|\Omega_{u,\eps}} \in \cC^1(\Omega_{u,\eps};\bbR^3).
\end{equation*}
\end{Cor}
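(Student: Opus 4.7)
The plan is to apply Lemma~\ref{lem_approx} with parameter $\eps/2$ and then \emph{thicken} the exceptional set $\omega_{u,\eps/2}$ into a slightly larger open set $W$ whose boundary is smooth, so that $\Omega_{u,\eps}:=\mathring\Omega\setminus\overline W$ inherits a piecewise $\cC^1$ boundary from $\partial\Omega\cup\partial W$.

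First, I would invoke Lemma~\ref{lem_approx} with $\eps/2$ in place of $\eps$, obtaining an open set $\omega:=\omega_{u,\eps/2}\subset\Omega$ with $\lambda(\omega)<\eps/2$ together with a function $u_{\eps/2}\in\cC^1(\Omega;\bbR^3)$ that coincides with $u$ on $\Omega\setminus\omega$. Once a suitable $W\supset\omega$ is found with $\lambda(W)<\eps$ and piecewise $\cC^1$ boundary, the set $\Omega_{u,\eps}:=\mathring\Omega\setminus\overline W$ is open, its boundary is contained in $\partial\Omega\cup\partial W$, and the inclusion $\Omega_{u,\eps}\subset\Omega\setminus\omega$ ensures $u|_{\Omega_{u,\eps}}=u_{\eps/2}|_{\Omega_{u,\eps}}$, which is $\cC^1$ as the restriction of a $\cC^1$ function to an open subset. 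Since $\partial\Omega$ has Lebesgue measure zero, $\lambda(\Omega\setminus\Omega_{u,\eps})\leq\lambda(W)+\lambda(\partial\Omega)<\eps$ will follow.

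The main obstacle, therefore, is the construction of $W$: one must enlarge an arbitrary open set $\omega$ of small measure into an open superset with a smooth (or at worst piecewise $\cC^1$) boundary, without blowing up the measure past $\eps$. My preferred approach is the classical mollification-plus-Sard device. Using boundedness of $\Omega$ and continuity of measure from above, choose $\delta>0$ so that the open $\delta$-neighbourhood $\omega^{(\delta)}=\{x\in\bbR^d:\mathrm{dist}(x,\omega)<\delta\}$ satisfies $\lambda(\omega^{(\delta)})<\eps$. The distance function $d(x)=\mathrm{dist}(x,\omega)$ is $1$-Lipschitz but not $\cC^1$ in general, so mollify it against a standard kernel $\rho_\eta$ of radius $\eta\ll\delta$ to obtain $G\in\cC^\infty(\bbR^d)$ with $\|G-d\|_\infty\leq\eta$. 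By Sard's theorem the set of regular values of $G$ is dense, so a regular value $c\in(\eta,\delta-\eta)$ can be selected; then $W:=\{G<c\}$ is open, its boundary is the $\cC^\infty$ hypersurface $\{G=c\}$, and the sandwich $\omega\subset W\subset\omega^{(\delta)}$ gives $\lambda(W)<\eps$. Since $W$ is bounded, $\partial W$ has finitely many smooth components, so it is piecewise $\cC^1$ in any reasonable sense.

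Finally I would verify each required property for $\Omega_{u,\eps}:=\mathring\Omega\setminus\overline W$: openness is immediate; the boundary inclusion $\partial\Omega_{u,\eps}\subset\partial\Omega\cup\partial W$ gives the piecewise $\cC^1$ structure (finitely many pieces from $\partial\Omega$ by assumption on $\Omega$, finitely many smooth pieces from $\partial W$); the measure estimate follows as above; and the regularity $u|_{\Omega_{u,\eps}}\in\cC^1(\Omega_{u,\eps};\bbR^3)$ follows from $\Omega_{u,\eps}\subset\Omega\setminus\omega$ via the coincidence $u=u_{\eps/2}$ provided by Lemma~\ref{lem_approx}. The only genuinely delicate point is the smoothing step, and the mollification-plus-Sard construction is the essential trick that makes it go through cleanly.
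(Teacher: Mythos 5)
There is a genuine gap at the one step you yourself identify as the crux: the choice of $\delta>0$ with $\lambda(\omega^{(\delta)})<\eps$. The sets $\omega^{(\delta)}$ decrease, as $\delta\downarrow 0$, to $\overline{\omega}$, not to $\omega$, so continuity of measure from above only gives $\lambda(\omega^{(\delta)})\to\lambda(\overline{\omega})$. For an arbitrary open set $\omega$ delivered by the bare statement of Lemma \ref{lem_approx}, $\lambda(\overline{\omega})$ can be as large as $\lambda(\Omega)$ even though $\lambda(\omega)<\eps/2$ (take $\omega$ a union of tiny balls centred on a dense countable subset of $\Omega$). In that situation no open $W\supset\omega$ of small measure exists at all: any open superset of a dense open set is dense, so $\mathring\Omega\setminus\overline{W}$ would be empty and the measure estimate $\lambda(\Omega\setminus\Omega_{u,\eps})\leq\eps$ would fail. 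The mollification-plus-Sard construction itself is fine (the sandwich $\omega\subset\{G<c\}\subset\omega^{(\delta)}$ and the smoothness of a compact regular level set are correct), but it cannot rescue you from a bad $\overline{\omega}$; the obstruction is upstream of the smoothing.

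The paper sidesteps this by not treating $\omega_{u,\eps}$ as a black box: inside the proof of Lemma \ref{lem_approx} the exceptional set is arranged to lie in a \emph{finite} union of open balls $\bigcup_{i=1}^{N}B_i$ covering the compact set $\overline{X_u}$ with $\sum_i\diam(B_i)^3<\eps$, and the Corollary is then immediate by taking $\Omega_{u,\eps}$ to be the interior of $\Omega\setminus\bigcup_{i=1}^{N}B_i$, whose boundary is piecewise $\cC^1$ because it is contained in $\partial\Omega$ together with finitely many spheres. To repair your argument you need exactly this extra information: either strengthen the statement of Lemma \ref{lem_approx} to record that $\omega_{u,\eps}$ may be chosen as (a subset of) a finite union of balls, or reprove that covering step here. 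Once $\omega$ is a finite union of balls, $\overline{\omega}$ has the same measure as $\omega$ and your thickening (or indeed no thickening at all) goes through.
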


\begin{Lem}
\label{lem_construct}
Let $u\in \cC^1(\Omega;\bbR^3)$. Then $(p_n(u))_{n\in\bbN^*}\in\gH$, $p_n(u) \xrightarrow[n\to\infty]{}u$ and $\lim_{n\to\infty} \cE_{n,\rmex}(p_n(u)) = \cE_{\infty,\rmex}(u)$.
Moreover if $u$ has a constant unit norm, then $(p_n(u))_{n\in\bbN^*}\in\gH_1$.
\end{Lem}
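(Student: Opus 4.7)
The plan is to verify the four assertions of the lemma in turn. Three are straightforward; only the energy convergence requires genuine work.

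First I would check $(p_n(u))_{n\in\bbN^*}\in\gH$: since $\Omega$ is compact and $u\in\cC^1(\Omega;\bbR^3)$, $u$ is globally $L$-Lipschitz with $L=\|\nabla u\|_\infty$, and for any $x\in\cL_{n,\Omega}$ and $y\in N_{n,\Omega,x}\subset\cB(x,ka/n)$ one has $|u(y)-u(x)|^2\leq(Lka)^2/n^2$, which is Hypothesis \ref{hyp_local2} with $c=(Lka)^2$ and empty exceptional set $l_{n,\Omega}$. The unit-norm statement then follows immediately from $(p_n(u))_x=u(x)$. For the convergence $p_n(u)\xrightarrow[n\to\infty]{}u$ in the sense of Definition \ref{def_converge}, I would observe that $P_n(p_n(u))$ is the piecewise-linear interpolant of $u$ on the tetrahedral mesh of size $a/n$ from Section \ref{sec_elements}; standard finite-element estimates for $u\in\cC^1(\Omega;\bbR^3)$ give $\|P_n(p_n(u))-u\|_{H^1(\Omega;\bbR^3)}\to 0$, hence weak convergence in $H^1$.

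The main step is the computation of $\lim_n\cE_{n,\rmex}(p_n(u))$. Let $\omega$ denote the modulus of continuity of $\nabla u$ on $\Omega$. For an interior node $x$, whose neighborhood $N_{n,\Omega,x}$ consists of the six points $x\pm(a/n)e_i$ ($i=1,2,3$), a first-order Taylor expansion gives
\begin{equation*}
u(y)-u(x)=\nabla u(x)\cdot(y-x)+r_n(x,y),\qquad |r_n(x,y)|\leq \frac{a}{n}\,\omega(a/n).
\end{equation*}
Squaring and summing over the six neighbors, and using $\sum_{i=1}^3 e_i\otimes e_i=I_3$, yields
\begin{equation*}
\sum_{y\in N_{n,\Omega,x}}|u(y)-u(x)|^2=2(a/n)^2|\nabla u(x)|^2+o\bigl((a/n)^2\bigr),
\end{equation*}
uniformly in $x$ as $n\to\infty$. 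Multiplying by $Aa/n$, the leading term becomes $2A(a/n)^3|\nabla u(x)|^2$, and summing over the interior nodes of $\cL_{n,\Omega}$ produces a Riemann sum converging to $\cE_{\infty,\rmex}(u)=2A\int_\Omega|\nabla u(x)|^2\rmd x$.

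The main technical obstacle is handling the lattice nodes $x$ near $\partial\Omega$ whose neighborhoods $N_{n,\Omega,x}$ are incomplete. Their number is $O(n^{d-1})=O(n^2)$ by the surface regularity of $\partial\Omega$ encoded in $\#\cL_{n,\Omega}=Cn^d(1+O(1/n))$, and each such node contributes at most $O((a/n)^2)$ per neighbor term, so the total boundary correction to $\cE_{n,\rmex}(p_n(u))$ is at most $O(Aa/n\cdot n^2\cdot(a/n)^2)=O(1/n)\to 0$. This ensures that the Riemann sum argument applies globally and concludes the proof.
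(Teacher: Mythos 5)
Your proposal is correct and follows essentially the same route as the paper: the Lipschitz bound on $u$ gives $(p_n(u))_{n\in\bbN^*}\in\gH$ with empty exceptional set, and the energy limit comes from the same Taylor expansion at each interior node, the identity $\sum_{y\in N_{n,\Omega,x}}|\nabla u(x)\cdot(y-x)|^2=2(a/n)^2|\nabla u(x)|^2$, a Riemann sum, and an $O(n^2)\cdot O(1/n^3)$ bound on the incomplete-neighborhood nodes (the paper's set $\cD_{n,\Omega}$). The only cosmetic difference is that for $p_n(u)\to u$ you invoke a direct finite-element interpolation estimate giving strong $H^1$ convergence of $P_n(p_n(u))$, whereas the paper gets weak convergence from Lemma \ref{lem_converge} and identifies the limit via uniform convergence; both are fine.
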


For all $\eps>0$, we can therefore associate to $m_\infty$ an open set $\Omega_\eps$ on which it is $\cC^1$ and define the approximate energies:
\begin{eqnarray*}
\cE^\eps_{n,\rmex}(\mu_n) 
& = & \frac{a}{n} \sum_{x\in\cL_{n,\Omega_\eps}} \sum_{y\in N_{n,\Omega_\eps,x}} A |\mu_{n,y}-\mu_{n,x}|^2\hspace{1cm} \textrm{ for all }n\in\bbN^*, \\
\cE^\eps_{\infty,\rmex}(m_\infty) 
& = & 2 A \int_{\Omega_\eps} |\nabla m_\infty(x)|^2\d x.
\end{eqnarray*}
Comparing $\cE^\eps_{n,\rmex}(\mu_n)$ and $\cE^\eps_{n,\rmex}(p_n(m_\infty))$, we first show the following.

\begin{Lem}
\label{lem_sci}
For all $\eps>0$, $\liminf_{n\to\infty} \cE^\eps_{n,\rmex}(\mu_n) \geq  \cE^\eps_{\infty,\rmex}(m_\infty)$.
\end{Lem}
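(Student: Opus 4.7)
The strategy exploits the quadratic nature of $\cE^\eps_{n,\rmex}$. Setting $\nu_n := p_n(m_\infty) \in W_n$ (well defined on $\cL_{n,\Omega_\eps}$ since $m_\infty$ is $\cC^1$ there) and $\xi_n := \mu_n - \nu_n$, the pointwise identity $|a+b|^2 = |a|^2 + 2 a\cdot b + |b|^2$ applied to each edge difference produces the decomposition
\begin{equation*}
\cE^\eps_{n,\rmex}(\mu_n) = \cE^\eps_{n,\rmex}(\nu_n) + 2 B_n + \cE^\eps_{n,\rmex}(\xi_n),
\end{equation*}
with cross term
\begin{equation*}
B_n := \frac{a}{n}\sum_{x\in\cL_{n,\Omega_\eps}}\sum_{y\in N_{n,\Omega_\eps,x}} A\,(\nu_{n,y}-\nu_{n,x})\cdot(\xi_{n,y}-\xi_{n,x}).
\end{equation*}
Lemma \ref{lem_construct} applied to $m_\infty|_{\Omega_\eps}$ gives $\cE^\eps_{n,\rmex}(\nu_n) \to \cE^\eps_{\infty,\rmex}(m_\infty)$, and trivially $\cE^\eps_{n,\rmex}(\xi_n)\ge 0$; the whole difficulty is thus to show $B_n \to 0$.

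For this, I would use the $\cC^1$-regularity of $m_\infty$ on the compact $\overline{\Omega_\eps}$ (which has piecewise $\cC^1$ boundary) to Taylor expand: for neighbours $x,y\in\cL_{n,\Omega_\eps}$ at distance $a/n$,
\begin{equation*}
\nu_{n,y}-\nu_{n,x} = \nabla m_\infty(x)\cdot(y-x) + o\!\left(\frac{1}{n}\right),
\end{equation*}
uniformly in $x,y$. Substituting into $B_n$, splitting the sum according to the three coordinate directions of the cubic lattice, and recognising the outcome as a Riemann sum with cells of volume $(a/n)^3$, one arrives at
\begin{equation*}
B_n = 2A\int_{\Omega_\eps} \nabla m_\infty(x)\cdot \nabla P_n(\xi_n)(x)\,\d x + o(1).
\end{equation*}
The $o(1)$ absorbs three contributions: the Taylor remainders (summed over $O(n^3)$ edges, with the prefactor $a/n$ and finite differences $|\xi_{n,y}-\xi_{n,x}|=O(1/n)$ controlled by Hypothesis \ref{hyp_local2} and the smoothness of $m_\infty$); the usual Riemann-sum error for the $\cC^0$ integrand $\nabla m_\infty$; and the contribution of the exceptional set $l_{n,\Omega_\eps}$ of cardinality $O(n)$, which yields an $O(\sqrt{c_n}/n)\to 0$ term.

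Now $P_n(\nu_n)\to m_\infty$ strongly in $H^1(\Omega_\eps;\bbR^3)$ by standard piecewise-linear interpolation estimates for $\cC^1$ functions, while $P_n(\mu_n)\rightharpoonup m_\infty$ weakly in $H^1$ by Lemma \ref{lem_converge}; hence $\nabla P_n(\xi_n)\rightharpoonup 0$ weakly in $L^2(\Omega_\eps;\bbR^{3d})$, and pairing against the fixed test $\nabla m_\infty\in L^2$ yields $B_n\to 0$. Assembling the three pieces,
\begin{equation*}
\liminf_{n\to\infty}\cE^\eps_{n,\rmex}(\mu_n) \ge \lim_{n}\cE^\eps_{n,\rmex}(\nu_n) + 2\lim_n B_n + \liminf_n\cE^\eps_{n,\rmex}(\xi_n) \ge \cE^\eps_{\infty,\rmex}(m_\infty).
\end{equation*}
The main obstacle is the rewriting of the discrete bilinear form $B_n$ as $2A\int \nabla m_\infty\cdot\nabla P_n(\xi_n)$ with a quantitatively controlled error: one must align each directional lattice difference of $\xi_n$ with the matching component of $\nabla m_\infty$ after the Taylor step and carefully balance the small-edge factors against the $O(n^3)$ number of edges so that the remainder truly collapses to $o(1)$, despite $m_\infty$ having only $\cC^1$ regularity (so no $O(1/n^2)$ Taylor estimate is available).
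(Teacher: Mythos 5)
Your proposal follows the same skeleton as the paper's proof: expand the quadratic form around the interpolant $p_n(m_\infty)$, discard the nonnegative remainder $\cE^\eps_{n,\rmex}(\xi_n)$ (the paper's term $2A\,a_n$), identify $\lim_n\cE^\eps_{n,\rmex}(p_n(m_\infty))=\cE^\eps_{\infty,\rmex}(m_\infty)$ via Lemma \ref{lem_construct}, and reduce everything to the vanishing of the cross term (the paper's $b_n$). Where you genuinely diverge is in how that cross term is killed. The paper performs a discrete summation by parts to move the difference operator onto $m_\infty$, which forces it to introduce a smooth approximant $m_\eta$ (second differences of a merely $\cC^1$ function are not $O(1/n^2)$) and to invoke the compact embedding of $H^1$ into $L^1$ to control $\sum_x (a/n)^3|\psi_{n,x}|$; you instead convert $B_n$ into the continuous pairing $2A\int_{\Omega_\eps}\nabla m_\infty\cdot\nabla P_n(\xi_n)\,\d x$ and conclude by weak $L^2$ convergence of $\nabla P_n(\xi_n)$ to $0$ against the fixed test $\nabla m_\infty$. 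This avoids the $\eta$-approximation entirely and is cleaner in spirit. The price is the identification $B_n=2A\int_{\Omega_\eps}\nabla m_\infty\cdot\nabla P_n(\xi_n)\,\d x+o(1)$, which you rightly flag as the main obstacle but somewhat undersell as a Riemann-sum error: with the five-tetrahedra partition, $\nabla P_n(\xi_n)$ on the center tetrahedra is built from \emph{diagonal} differences (edges in $C_n$) that do not appear in $B_n$ at all, so matching the two expressions requires rewriting each diagonal difference as a sum of two lattice differences and controlling the resulting cross sums over $S_n$ and the boundary overcount $\cS_{n,\Omega_\eps}$ --- essentially the bookkeeping of the paper's proof of Lemma \ref{lem_converge} and of its terms $c_n$, $d_n$, $e_n$, $f_n$. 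These extra sums do vanish (an $o(1/n)$ or $O(1/n)$ factor per edge against $O(n^3)$ edges and the $a/n$ prefactor, plus the $O(n)$ exceptional set of Hypothesis \ref{hyp_local2}), so your argument closes, but that step is where most of the technical work actually lives. One further small point: the strong $H^1(\Omega_\eps)$ convergence $P_n(p_n(m_\infty))\to m_\infty$ is not literally the statement of Lemma \ref{lem_construct} (which gives weak convergence plus convergence of the energies); it follows from combining those two facts, or, as you indicate, from uniform continuity of $\nabla m_\infty$ on $\overline{\Omega_\eps}$.
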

Estimating the remainder of the sums and integrals on $\omega_\eps$, we can then prove the following.
\begin{Lem}
\label{lem_concl}
$\liminf_{n\to\infty} \cE_{n,\rmex}(\mu_n) \geq  \cE_{\infty,\rmex}(m_\infty)$.
\end{Lem}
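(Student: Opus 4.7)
The plan is to let $\eps \to 0$ in Lemma \ref{lem_sci} and show that both sides converge to the expected quantities. The key observation is a monotonicity property together with the absolute continuity of the Lebesgue integral.

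First, I would compare $\cE^\eps_{n,\rmex}(\mu_n)$ with $\cE_{n,\rmex}(\mu_n)$ on the discrete side. Since $\Omega_\eps \subset \Omega$, we have $\cL_{n,\Omega_\eps} \subset \cL_{n,\Omega}$ and $N_{n,\Omega_\eps,x} \subset N_{n,\Omega,x}$ for every $x$. As each summand $A|\mu_{n,y}-\mu_{n,x}|^2$ is non-negative, this gives the pointwise bound
\begin{equation*}
\cE^\eps_{n,\rmex}(\mu_n) \leq \cE_{n,\rmex}(\mu_n), \quad \forall n\in\bbN^*.
\end{equation*}
Taking the $\liminf$ and applying Lemma \ref{lem_sci} then yields, for every $\eps>0$,
\begin{equation*}
\liminf_{n\to\infty} \cE_{n,\rmex}(\mu_n) \geq \liminf_{n\to\infty} \cE^\eps_{n,\rmex}(\mu_n) \geq \cE^\eps_{\infty,\rmex}(m_\infty) = 2A\int_{\Omega_\eps} |\nabla m_\infty(x)|^2\,\d x.
\end{equation*}

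Next, I would pass to the limit $\eps\to 0$ on the continuous side. By the Corollary following Lemma \ref{lem_approx}, $\lambda(\Omega\setminus\Omega_\eps)\leq \eps$. Since $m_\infty\in H^1(\Omega;\bbR^3)$, the function $|\nabla m_\infty|^2$ belongs to $L^1(\Omega)$, so the absolute continuity of the Lebesgue integral gives
\begin{equation*}
\int_{\Omega\setminus\Omega_\eps} |\nabla m_\infty(x)|^2\,\d x \xrightarrow[\eps\to 0]{} 0,
\end{equation*}
hence $\cE^\eps_{\infty,\rmex}(m_\infty) \to \cE_{\infty,\rmex}(m_\infty)$.

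Combining these two steps gives
\begin{equation*}
\liminf_{n\to\infty} \cE_{n,\rmex}(\mu_n) \geq \sup_{\eps>0} \cE^\eps_{\infty,\rmex}(m_\infty) = \cE_{\infty,\rmex}(m_\infty),
\end{equation*}
which is the desired inequality. I do not expect any serious obstacle here: the real technical work is already concentrated in Lemma \ref{lem_sci}, where one must compare the discrete energy and the gradient of the piecewise-linear interpolant on the reduced $\cC^1$ domain. The present lemma is essentially a compatibility argument ensuring that the $\cC^1$-approximation procedure of Lemma \ref{lem_approx} does not lose mass in the limit—the only subtle point being to check that discarding nodes in $\cL_{n,\Omega}\setminus \cL_{n,\Omega_\eps}$ costs nothing in the $\liminf$, which follows automatically from non-negativity of the discrete energy density.
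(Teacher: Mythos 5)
Your proof is correct, and it is genuinely leaner than the one in the paper. The paper's argument establishes a \emph{two-sided} quantitative comparison: it invokes Hypothesis \ref{hyp_local2} to show that the $\eps O(n^3)$ couples $(x,y)$ dropped when passing from $\Omega$ to $\Omega_\eps$ contribute at most $c\eps$ to the discrete energy, so that $\lim_{n\to\infty}|\cE^\eps_{n,\rmex}(\mu_n)-\cE_{n,\rmex}(\mu_n)|\leq c\eps$, and it separately bounds $\cE_{\infty,\rmex}(m_\infty)-\cE^\eps_{\infty,\rmex}(m_\infty)\leq c_1\eps$ by applying weak lower semi-continuity of the $H^1$ seminorm to $P_n(\mu_n)$ restricted to the complement of $\Omega_\eps$; it then concludes $\liminf_n \cE_{n,\rmex}(\mu_n)\geq \cE_{\infty,\rmex}(m_\infty)-(c+c_1)\eps$ and sends $\eps\to0$. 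You replace both of these quantitative estimates by soft one-sided arguments: on the discrete side the inclusion $\cL_{n,\Omega_\eps}\subset\cL_{n,\Omega}$, $N_{n,\Omega_\eps,x}\subset N_{n,\Omega,x}$ together with non-negativity of each summand gives $\cE^\eps_{n,\rmex}(\mu_n)\leq\cE_{n,\rmex}(\mu_n)$ outright, and on the continuous side absolute continuity of the integral of $|\nabla m_\infty|^2\in L^1(\Omega)$ over sets of measure at most $\eps$ gives $\cE^\eps_{\infty,\rmex}(m_\infty)\to\cE_{\infty,\rmex}(m_\infty)$. Since only the lower bound is needed for the $\liminf$ inequality, nothing is lost: your route avoids re-invoking Hypothesis \ref{hyp_local2} and the $\cS_{n,\Omega}$-type boundary bookkeeping entirely, at the price of giving up the quantitative $O(\eps)$ rate that the paper's version incidentally provides (and which is not used elsewhere). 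The only cosmetic point is that $\sup_{\eps>0}\cE^\eps_{\infty,\rmex}(m_\infty)$ should really be read as $\lim_{\eps\to0}\cE^\eps_{\infty,\rmex}(m_\infty)$, since the sets $\Omega_\eps$ need not be nested; the conclusion is unaffected because the fixed number $\liminf_n\cE_{n,\rmex}(\mu_n)$ dominates $\cE^\eps_{\infty,\rmex}(m_\infty)$ for every $\eps$.
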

\end{proof}

\subsubsection{Proof of Lemma \ref{lem_converge}: Limit of a lattice of spins}
\label{sec_elements}

Lemma \ref{lem_converge} is proved using an explicit computation of the projection $P_n(\mu_n)$. 
To this aim, we define, for all $n\in\bbN^*$,
\begin{itemize}
\item $\cT_n$, the set of tetrahedra which form the partition of $\cL_{n,\Omega}$;
\item $T_n$, the set of corner tetrahedra (4 for each mesh of the lattice, see Figure \ref{fig_tetra});
\item $T^*_n$, the set of center tetrahedra (1 for each mesh of the lattice, see Figure \ref{fig_tetra});
\item $E_n$, the set of edges of the mesh of $ \cL_{n,\Omega}$ (given by couples of the indices of the lattice nodes); 
\item $C_n$, the set of edges of elements of $T^*_n$;
\item $S_n$, the set of outer surfaces of $T_n$ (triplets $(i,j,k)$ where $(i,j)\in C_n$, $(i,k)$ and $(j,k) \in E_n$).
\end{itemize}

\begin{figure}[H]
\begin{center}
\begin{tikzpicture}[scale=2]
\draw(0,0) -- (1,0);
\draw(0,0) -- (0,1);
\draw(0,1) -- (1,1);
\draw(1,0) -- (1,1);
\draw(0,1) -- (0,1,-1);
\draw(0,1,-1) -- (1,1,-1);
\draw[very thick](1,1) -- (1,1,-1);
\draw(1,0) -- (1,0,-1);
\draw(1,0,-1) -- (1,1,-1);
\draw[dashed](0,0) -- (0,0,-1);
\draw[dashed](0,0,-1) -- (1,0,-1);
\draw[dashed](0,0,-1) -- (0,1,-1);
\draw[very thick](1,1) -- (1,0,-1);
\shadedraw[shading=axis,shading angle=-45] (0,1) -- (1,1) -- (0,1,-1);
\shadedraw[shading=axis,shading angle=180] (0,0) -- (1,0) -- (0,1);
\draw[->] (-.5,1) node[left] {$s\in S_n$} -- (.3,1,-.3); 
\draw[->] (-.5,.9) -- (.3,.3,0); 
\draw[->] (1.5,1,-.7) node[right] {$e\in E_n$} -- (1,1,-.7);
\draw[->] (1.5,.5,-.5) node[right] {$e\in C_n$} -- (1,.5,-.5);
\end{tikzpicture}
\end{center}
\caption{\label{fig_sets}Different sets of tetraedra and edges.}
\end{figure}
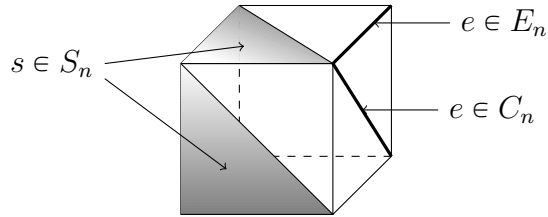

\begin{Rmk}
\label{rmk_polyhedra}
Generically $\Omega_n\equiv\bigcup_{\tau\in\cT_n} \tau \varsubsetneq \Omega$.
We suppose that the geometry of $\Omega$ is such that $\lambda(\Omega\setminus \Omega_n) = O(1/n)$, i.e. there are order $n^2$ tetrahedra covering the difference set. 
We set $P_n(\mu_n)$ to zero on $\Omega\setminus \Omega_n$ . 
\end{Rmk}

\noindent
\textbf{Step 1. Estimate of $\int_\Omega|P_n(\mu_n)(x)|^2\d x$.}

Let $x\in\Omega_n$, then there exist $\tau\in\cT_n$ and $x\in\tau$.
Since $P_n(\mu_n)$ is linear on $\tau$, then $P_n(\mu_n)(x)$ is a pondered  mean of the $\mu_{n,x_i^\tau}$, $i=1,\dots,4$, where $(x_1^\tau,x_2^\tau,x_3^\tau,x_4^\tau)\in(\cL_{n,\Omega})^4$ are the vertices of $\tau$.
Therefore $|P_n(\mu_n)(x)|\leq1$. 
Hence
\begin{eqnarray*}
\int_\Omega |P_n(\mu_n)(x)|^2\d x 
& = & \int_{\Omega_n} |P_n(\mu_n)(x)|^2\d x 
= \sum_{\tau\in\cT_n} \int_\tau |P_n(\mu_n)(x)|^2\d x \\
& \leq & \sum_{\tau\in\cT_n} \lambda(\tau) = \lambda(\Omega_n).
\end{eqnarray*}
 
\noindent
\textbf{Step 2. Explicit computation of $\int_\Omega|\nabla P_n(\mu_n)(x)|^2\d x$.}

We compute $\int_{\Omega}|\nabla P_n(\mu_n)(x)|^2\d x$ explicitly using the tetrahedron decomposition of the lattice.
On each tetrahedron $P_n(\mu_n)$ is linear, and therefore its gradient is constant. 
 
If $\tau\in T_n$, we can construct an orthogonal system using the lattice nodes $(x_1^\tau,x_2^\tau,x_3^\tau,x_4^\tau)\in(\cL_{n,\Omega})^4$, for example $(\mu_{n,x_i^\tau}-\mu_{n,x_1^\tau})_{i=2,\dots,4}$.
Each direction yields one component of the gradient.
Since the length of the edges are $a/n$, the components of the gradient are   the $(\mu_{n,x_i^\tau}-\mu_{n,x_1^\tau})n/a$. 
Besides the volume of $\tau$ is $\frac 16 (a/n)^3$, therefore 
\begin{equation*}
\int_\tau |\nabla P_n(\mu_n)(x)|^2 \d x 
= \frac{a}{6n} \sum_{i=2}^4 |\mu_{n,x_i^\tau}-\mu_{n,x_1^\tau}|^2.
\end{equation*}
For a center tetrahedron $\tau*\in T^*_n$, it is a bit more tricky since the edges are not orthogonal.
The volume is of course the complement of the volumes of the corner tetrahedra, namely $\frac13 (a/n)^3$.
The computation yields 
\begin{equation*}
|\nabla P_n(\mu_n)(x)|^2 = \frac{n^2}{4a^2} \sum_{1\leq i,j \leq4} |\mu_{n,x_i^{\tau^*}}-\mu_{n,x_j^{\tau^*}}|^2
\end{equation*}
and hence
\begin{equation*}
\int_{\tau*} |\nabla P_n(\mu_n)(x)|^2 \d x 
= \frac{a}{12n} \sum_{1\leq i,j \leq4} |\mu_{n,x_i^{\tau^*}}-\mu_{n,x_j^{\tau^*}}|^2.
\end{equation*}
 
Gathering all the contributions
\begin{equation*}
\begin{aligned}
\int_\Omega |\nabla P_n&(\mu_n)(x)|^2 \d x \\
& = \sum_{\tau \in \cT_n} \int_\tau |\nabla P_n(\mu_n)(x)|^2 \d x \\
& = \sum_{\tau \in T_n} \int_\tau |\nabla P_n(\mu_n)(x)|^2 \d x 
+ \sum_{\tau* \in T^*_n} \int_{\tau*} |\nabla P_n(\mu_n)(x)|^2 \d x \\ 
& = 4 \sum_{(i,j)\in E_n} \frac{a}{6n} |\mu_{n,x_i}-\mu_{n,x_j}|^2 
+ 2 \sum_{(i,j)\in C_n} \frac{a}{12n} |\mu_{n,x_i}-\mu_{n,x_j}|^2 - \cS_{n,\Omega}.
\end{aligned}
\end{equation*} 
The coefficient 4 in front of the sum on $E_n$ occurs because each element of $E_n$ is an element of four $\tau\in T_n$.
Similarly each element of $C_n$ belongs to two $\tau^*\in T^*_n$. 
The positive error $\cS_{n,\Omega}$ is due to an over-estimation because some of the edges $e\in E_n$ are on the outer surface of $\Omega_n$ and have been counted too many times. 
Following Remark \ref{rmk_polyhedra}, the contribution of $\cS_{n,\Omega}$ will always be $O(1/n)$ less than that of the other terms, and therefore will tend to zero as $n\to\infty$.

We rewrite the first sum
\begin{equation*}
4 \sum_{(i,j)\in E_n} \frac{a}{6n} |\mu_{n,x_i}-\mu_{n,x_j}|^2 =  \sum_{x\in\cL_{n,\Omega}} \sum_{y\in N_{n,\Omega,x}} \frac{a}{3n} |\mu_{n,x}-\mu_{n,y}|^2
\end{equation*}
(here each edge is counted twice through the couples $(x,y)$ and $(y,x)$).
For $x \in l_{n,\Omega}$, we can only estimate $(a/3n) |\mu_{n,x}-\mu_{n,y}|^2 \leq 4a/3n$, but there are only $O(n)$ such terms.
For $x\in\cL_{n,\Omega}\setminus l_{n,\Omega}$, $(a/3n) |\mu_{n,x}-\mu_{n,y}|^2 \leq ac/3n^3$ and there are $O(n^3)$ such terms.
Therefore
\begin{equation*}
4 \sum_{(i,j)\in E_n} \frac{a}{6n} |\mu_{n,x_i}-\mu_{n,x_j}|^2 = O(1).
\end{equation*}

For the second sum
\begin{equation*}
\begin{aligned}
& \sum_{(i,j)\in C_n} \frac{a}{6n} | \mu_{n,x_i}-\mu_{n,x_j}|^2 
= \frac12\sum_{(i,j,k)\in S_n} \frac{a}{6n} |\mu_{n,x_i}-\mu_{n,x_k} + \mu_{n,x_k}-\mu_{n,x_j}|^2 \\
& \begin{aligned}
= \sum_{(i,j,k)\in S_n} \frac{a}{12n} \Big( |\mu_{n,x_i}-\mu_{n,x_k}|^2 & + |\mu_{n,x_j}-\mu_{n,x_k}|^2 \\
  & + 2(\mu_{n,x_i}-\mu_{n,x_k})\cdot(\mu_{n,x_k}-\mu_{n,x_j})\Big) 
  \end{aligned} \\
& = \sum_{(i,j)\in E_n} \frac{a}{3n} |\mu_{n,x_i}-\mu_{n,x_j}|^2 +  \sum_{(i,j,k)\in S_n}   \frac{a}{6n} (\mu_{n,x_i}-\mu_{n,x_k})\cdot(\mu_{n,x_k}-\mu_{n,x_j}) \\
& = \sum_{x\in\cL_{n,\Omega}} \sum_{y\in N_{n,\Omega,x}} \frac{a}{6n} |\mu_{n,x}-\mu_{n,y}|^2 +  \sum_{(i,j,k)\in S_n} \frac{a}{6n} (\mu_{n,x_i}-\mu_{n,x_k})\cdot(\mu_{n,x_k}-\mu_{n,x_j}). 
\end{aligned}
\end{equation*}
As for the previous sum, we can decompose these sums into two $O(1)$ contributions. 
There are at most $O(n)$ terms contributing to $\cS_{n,\Omega}$ and stemming from an $x\in l_{n,\Omega}$, therefore $\cS_{n,\Omega}=O(1)$.
Therefore $\|P_n(\mu_n)\|_{H^1(\Omega;\bbR^3)}$ is uniformly bounded and $P_n(\mu_n)$ is weakly convergent in $H^1(\Omega;\bbR^3)$.
This, together with Definition \ref{def_converge}, leads to Lemma \ref{lem_converge}. 

A by-product of this proof is the fact that we can write
\begin{equation*}
\|\nabla P_n(\mu_n)\|^2_{L^2(\Omega;\cL(\bbR^3;\bbR^3))} 
= \int_\Omega |\nabla P_n(\mu_n)(x)|^2 \d x 
= \frac1{2A} \cE_{n,\rmex}(\mu_n) + \alpha_n(\mu_n)
\end{equation*}
where
\begin{equation*}
\alpha_n(\mu_n) = \frac{a}{6n} \sum_{(i,j,k)\in S_n} (\mu_{n,x_i}-\mu_{n,x_k})\cdot(\mu_{n,x_k}-\mu_{n,x_j}) - \cS_{n,\Omega}.
\end{equation*}

\subsubsection{Proof of Lemma \ref{lem_approx}: $\cC^1$ approximation of a $H^1$ function}

Following Ziemer theorem (see \cite{Ziemer89}, Theorem 3.11.6), we know that for any function $u\in H^1(\Omega;\bbR^3)$ and for all $\eps>0$, there exists a function $u_\eps\in\cC^1(\Omega;\bbR^3)$ such that $\lambda(\omega_{u,\eps})\leq\eps$, where 
\begin{equation*}
\omega_{u,\eps} := \{x\in\Omega \textrm{ such that } u(x) \neq u_\eps(x)\}.
\end{equation*}

We want to extend this result and be able to localize the irregularities of $u$ with respect to a shrinking lattice.

Let $u\in H^1(\Omega;\bbR^3)$, and $X_u$ be the set of points where $u$ is not $\cC^1$.
Since the gradient of $u\in H^1(\Omega;\bbR^3)$ has to be defined almost everywhere, there cannot be an open ball in $X_u$ and therefore $\overset{\circ}{\overline{X_u}}=\emptyset$.

We now fix $\eps>0$. 
The Lebesgue and $\cH^3$ Hausdorff measures coincide in $\bbR^3$ and therefore we both have $\lambda(\overline{X_u})\leq\eps$ and $\cH^3(\overline{X_u})\leq\eps$.\\
Hence there exists a sequence of open balls $(B_i)_{i\in\bbN}$ such that 
\begin{equation*}
\overline{X_u}\subset \bigcup_{i\in\bbN} B_i \text{ and } \sum_{i=1}^\infty \diam(B_i)^3 < \eps.
\end{equation*}
 
Since $\overline{X_u}$ is closed and bounded, it is compact and we can extract from this open cover a finite subcover.

Hence there exists $N\in\bbN$ such that
\begin{equation*}
\overline{X_u}\subset \bigcup_{i=1}^N B_i \text{ and } \sum_{i=1}^N\diam(B_i)^3 < \eps.
\end{equation*}
 
Obviously $\bigcup_{i=0}^N B_i$ is bounded with a piecewise $\cC^1$ boundary, hence 
\begin{equation*}
\lim_{n\to\infty} \frac{\#(\cL_{n,\Omega}\bigcap \bigcup_{i=1}^N B_i )}{\#(\cL_{n,\Omega})} = \lambda \left(\bigcup_{i=1}^N B_i \bigcap \Omega\right)< \eps.
\end{equation*}

Besides $u_{|\bigcup_{i=1}^N B_i } \in H^1(\bigcup_{i=1}^N B_i;\bbR^3)$, and we therefore can choose $u_\eps$ such that $\omega_{u,\eps} \subset \bigcup_{i=1}^N B_i$.
Finally
\begin{equation*}
\lim_{n\to\infty} \frac{\#(\cL_{n,\Omega}\bigcap \omega_{u,\eps})}{\#(\cL_{n,\Omega})} \leq \eps.
\end{equation*}

In our proof, we set $\Omega_\eps=\widering{\Omega\setminus\omega_{m_\infty,\eps}}$ and begin to work on the restricted shrunk lattice $\cL_{n,\Omega_\eps} = \cL_{n,\Omega} \bigcap \Omega_{\eps}$.
We also denote $\cD_{n,\Omega_\eps}$ the subset of elements $x\in \cL_{n,\Omega_\eps}$ such that $\#(N_{n,\Omega,x}\bigcap\Omega_\eps) \neq 6$, that is the set of nodes which are too close to $\partial\Omega_\eps$ to have their 6 nearest neighbors in $\Omega_\eps$.\\

Since $\partial\Omega_\eps$ is piecewise $\cC^1$ for all $\eps>0$, we know that $\# \cD_{n,\Omega_\eps} = O(n^2)$.

\subsubsection{Proof of Lemma \ref{lem_construct}: Construction} 

Let $x\in\cL_{n,\Omega}$ and $y\in N_{n,\Omega,x}$.
Since $u\in\cC^1(\Omega;\bbR^3)$, $\nabla u$ is bounded by some constant $C$ on $\Omega$ and 
\begin{equation*}
|p_n(u)_x-p_n(u)_y|^2 \leq C^2|x-y|^2 \leq C^2 \left(\frac an\right)^2 
= \frac{C^2a^2}{n^2}.
\end{equation*}
Therefore $p_n(u)\in\gH$.
Clearly if $|u|=1$ on $\Omega$, for all $x\in\cL_{n,\Omega}$, $|p_n(u)_x|=1$ and $p_n(u)\in\gH_1$.

Lemma \ref{lem_converge} implies that there exists $u_\infty\in H^1(\Omega;\bbR^3)$ such that $p_n(u) \xrightarrow[n\to\infty]{} u_\infty$.
Since $u\in\cC^1(\Omega;\bbR^3)$, $P_n(p_n(u))$ converges towards $u$. 
This convergence is pointwise and even uniform on $\Omega$.
Hence $u_\infty=u$.

Last
\begin{eqnarray*}
\cE_{n,\rmex}(p_n(u)) 
& = & \frac{a}{n} \sum_{x\in\cL_{n,\Omega}} \sum_{y\in N_{n,\Omega,x}} A |p_n(u)_y-p_n(u)_x|^2 \\
& = & \frac{a}{n} \sum_{x\in\cL_{n,\Omega}} \sum_{y\in N_{n,\Omega,x}} A |u(y)-u(x)|^2 .\\
\end{eqnarray*}
Since $u\in\cC^1(\Omega;\bbR^3)$, for all $x\in\cL_{n,\Omega}$ and all $y\in N_{n,\Omega,x}$, 
\begin{equation*}
\frac{|u(y)-u(x)|^2}{|y-x|^2} = \left|\nabla u(x)\cdot\frac{y-x}{|y-x|}\right|^2 + e_{n,x,y},
\end{equation*}
and therefore for all $x\in\cL_{n,\Omega}\setminus\cD_{n,\Omega}$
\begin{equation*}
\left(\frac{n}{a} \right)^2 \sum_{y\in N_{n,\Omega,x}} |u(y)-u(x)|^2 = 2|\nabla u(x)|^2 + e_{n,x},
\end{equation*} 
and the errors $e_{n,x}$ are uniformly $o(1)$ as $n\to\infty$.
Hence
\begin{equation*}
\cE_{n,\rmex}(p_n(u)) = 2A \sum_{x\in\cL_{n,\Omega}} \left(\frac{a}{n}\right)^3 (|\nabla u(x)|^2 + e_{n,x}) + O\left(\frac1n\right),
\end{equation*}
where the $O(1/n)$ stems from $x\in\cD_{n,\Omega}$.
Since $\lim_{n\to\infty} \sum_{x\in\cL_{n,\Omega}} (a/n)^3 e_{n,x} = 0$, and the border of $\Omega$ is piecewise $\cC^1$
\begin{equation*}
\lim_{n\to\infty} \sum_{x\in\cL_{n,\Omega}} \left(\frac{a}{n}\right)^3 |\nabla u(x)|^2 = \int_{\Omega} |\nabla u(x)|^2 \d x
\end{equation*}
and 
\begin{equation*}
\lim_{n\to\infty} \cE_{n,\rmex}(p_n(u)) = \cE_{\infty,\rmex}(u).
\end{equation*}

\subsubsection{Proof of Lemma \ref{lem_sci}: Lower semi-continuity}

With Definition \ref{def_converge} for the convergence, $P_n(\mu_n) \rightharpoonup m_\infty$ in $H^1(\Omega;\bbR^3)$, and 
\begin{equation*}
\liminf_{n\to\infty} \|P_n(\mu_n)\|_{H^1(\Omega;\bbR^3)} \geq \|m_\infty\|_{H^1(\Omega;\bbR^3)}
\end{equation*}
(see \cite{Braides02}, Proposition 2.3).

Since we have assumed that $\Omega$ is compact and has a piecewise $\cC^1$ boundary, we have already seen that the convergence is strong in $L^2(\Omega;\bbR^3)$ and therefore 
\begin{equation*}
\liminf_{n\to\infty} \| \nabla P_n(\mu_n)\|^2_{L^2(\Omega;\cL(\bbR^3; \bbR^d))} \geq \|\nabla m_\infty\|^2_{L^2(\Omega;\cL(\bbR^3;\bbR^d))}.
\end{equation*}

Let us first fix $\eps$ and work in $\Omega_\eps$.
Thanks to Proposition \ref{lem_construct}, we know that $m_\infty$ is continuous and 
\begin{equation*}
\|\nabla P_n(p_n(m_\infty))\|_{L^2(\Omega_\eps;\cL(\bbR^3;\bbR^d))} \xrightarrow[n\to\infty]{}  \| \nabla m_\infty\|_{L^2(\Omega_\eps;\cL(\bbR^3;\bbR^d))}.
\end{equation*}
This implies that
\begin{equation*}
\liminf_{n\to\infty} \left(\|\nabla P_n(\mu_n)\|^2_{L^2(\Omega_\eps;\cL(\bbR^3;\bbR^d))}
- \|\nabla P_n(p_n(m_\infty))\|^2_{L^2(\Omega_\eps;\cL(\bbR^3;\bbR^d))} \right) \geq 0.
\end{equation*}
According to Lemma \ref{lem_converge}, we can write
\begin{equation*}
\|\nabla P_n(\mu_n)\|^2_{L^2(\Omega_\eps;\cL(\bbR^3;\bbR^d))} 
= \frac1{2A}( \cE^\eps_{n,\rmex}(\mu_n) +\alpha_n^\eps(\mu_n)).
\end{equation*}
We therefore know that
\begin{equation*}
\liminf_{n\to\infty}\left(\frac1{2A} \cE^\eps_{n,\rmex}(\mu_n) - \frac1{2A} \cE^\eps_{n,\rmex}(p_n(m_\infty)) + \alpha_n^\eps(\mu_n)- \alpha_n^\eps(p_n(m_\infty)) \right) \geq 0.
\end{equation*}
We split this into 5 parts, namely
\begin{eqnarray*}
\frac1{2A} \cE^\eps_{n,\rmex}(\mu_n) - \frac1{2A}  \cE^\eps_{n,\rmex}(p_n(m_\infty)) & = & a_n + b_n, \\
\alpha_n^\eps(\mu_n)- \alpha_n^\eps(p_n(m_\infty)) & = & c_n+d_n+e_n +f_n,
\end{eqnarray*}
which can be expressed using $\psi_{n,x}:= \mu_{n,x}-m_\infty(x)$, defined for all $n\in \bbN^*$ and $x\in \cL_{n,\Omega_\eps}$:
\begin{eqnarray*}
a_n & = & \frac{a}{2n} \sum_{x\in\cL_{n,\Omega_\eps}} \sum_{y\in N_{n,\Omega_\eps,x}} |\psi_{n,x}-\psi_{n,y}|^2, \\
b_n & = & \frac{a}{2n} \sum_{x\in\cL_{n,\Omega_\eps}} \sum_{y\in N_{n,\Omega_\eps,x}} 2(m_\infty(x)-m_\infty(y))\cdot(\psi_{n,x}-\psi_{n,y}), \\
c_n & = & \frac{a}{6n} \sum_{(i,j,k)\in S_n} (m_\infty(x_i)-m_\infty(x_k))\cdot(\psi_{n,x_k}-\psi_{n,x_j}), \\
d_n & = & \frac{a}{6n} \sum_{(i,j,k)\in S_n} (\psi_{n,x_i}-\psi_{n,x_k})\cdot(m_\infty(x_k)-m_\infty(x_j)), \\
e_n & = & \frac{a}{6n} \sum_{(i,j,k)\in S_n} (\psi_{n,x_i}-\psi_{n,x_k})\cdot(\psi_{n,x_k}-\psi_{n,x_j}),\\
f_n  & = & \cS_{n,\Omega_\eps}(p_n(m_\infty)) - \cS_{n,\Omega_\eps}(\mu_n).
\end{eqnarray*}
We show below that $e_n\leq a_n$, and $b_n$, $c_n$, $d_n$ and $f_n$ tend to zero.
This implies that
\begin{equation*}
\liminf_{n\to\infty} (\alpha_n(\mu_n)-\alpha_n(p_n(m_\infty))) 
\leq \frac1{2A} \liminf_{n\to\infty} (\cE^{\eps}_{n,\rmex}(\mu_n)-\cE^{\eps}_{n,\rmex}(p_n(m_\infty)))
\end{equation*}
and therefore 
\begin{equation*}
\liminf_{n\to\infty} (\cE^{\eps}_{n,\rmex}(\mu_n) - \cE^{\eps}_{n,\rmex}(p_n(m_\infty))) \geq 0.
\end{equation*}
Lemma \ref{lem_construct} implies that $\lim_{n\to\infty} \cE^{\eps}_{n,\rmex}(p_n(m_\infty)) = \cE^{\eps}_{\infty,\rmex}(m_\infty)$, hence
\begin{equation*}
\liminf_{n\to\infty} \cE^{\eps}_{n,\rmex}(\mu_n) \geq \cE^{\eps}_{\infty,\rmex}(m_\infty).
\end{equation*}
which ends the proof.

\paragraph{Proof of $e_n\leq a_n$.}

For all $x_i$, $x_j$, $x_k\in\cL_{n,\Omega_\eps}$, 
\begin{equation*}
(\psi_{n,x_i}-\psi_{n,x_k})\cdot(\psi_{n,x_k}-\psi_{n,x_j}) 
\leq \frac12 (|\psi_{n,x_i}-\psi_{n,x_k}|^2+|\psi_{n,x_k}-\psi_{n,x_j}|^2).
\end{equation*}
Since each couple $(i,j)\in E_n$ is an element of 4 triples in $S_n$, we have
\begin{eqnarray*}
\sum_{(i,j,k)\in S_n} (\psi_{n,x_i}-\psi_{n,x_k})\cdot(\psi_{n,x_k}-\psi_{n,x_j}) 
& \leq & 2 \sum_{(i,j)\in E_n}|\psi_{n,x_i}-\psi_{n,x_j}|^2 \\
& \leq & \sum_{x\in\cL_{n,\Omega_\eps}} \sum_{y\in N_{n,\Omega_\eps,x}} |\psi_{n,x}-\psi_{n,y}|^2,
\end{eqnarray*}
which is a much stronger result than $e_n\leq a_n$. 
 
\paragraph{Proof of $c_n$ and $d_n\to0$.}
 
Let $(\psi_{n,x}-\psi_{n,y})\cdot(m_\infty(y)-m_\infty(z))$ be one term of the sum in $c_n$ and set $v=z-y$.
Then 
\begin{equation*}
(\psi_{n,x}-\psi_{n,y})\cdot(m_\infty(y)-m_\infty(z))
= (\psi_{n,x}-\psi_{n,y})\cdot(m_\infty(y)-m_\infty(y+v)) 
\end{equation*}
and in the same sum there is also a term $(\psi_{n,x}-\psi_{n,y})\cdot(m_\infty(y)-m_\infty(y-v))$, except for $y\in \cD_{n,\Omega_\eps}$.

Now since $\mu_n\in\gH$, there exists a constant $C_\psi\geq0$ such that 
\begin{equation*}
|\psi_{n,x}-\psi_{n,y}| \leq C_\psi \frac an,
\end{equation*}
except for $y \in l_{n,\Omega_\eps}$, but since there are $O(n)$ such nodes, their contribution in $c_n$ tends to 0.
We also have
\begin{equation*}
\begin{aligned}
\frac{n}a((m_\infty(y)-m_\infty(y-v)) & + (m_\infty(y)-m_\infty(y+v))) \\
& \xrightarrow[n\to\infty]{} (\nabla m_\infty(y)-\nabla m_\infty(y))\cdot \frac{v}{|v|} = 0,
\end{aligned}
\end{equation*}
and therefore
\begin{eqnarray*}
\frac{a}{6n} \sum_{(i,j,k)\in S_n}(\psi_{n,x_i}-\psi_{n,x_k})\cdot(m_\infty(x_k)-m_\infty(x_j)) 
& = & O\left(\frac1n\right)O(n^3)O\left(\frac1n\right)o\left(\frac1n\right) \\
& = & o(1).
\end{eqnarray*}

\begin{Rmk}
When $y\in\cD_{n,\Omega_\eps}$, we can only say that $|\psi_{n,x}-\psi_{n,y}| \leq C_\psi a/n$ (except on $l_{n,\Omega_\eps}$) and $|m_\infty(y)-m_\infty(z)| \leq C_m a/n$, and since $\#\cD_{n,\Omega_\eps}=O(n^2) $, the contribution of these nodes in $c_n$ is a 
\begin{equation*}
O(n^2)O\left(\frac1n\right)O\left(\frac1n\right)O\left(\frac1n\right)+O(n)O\left(\frac1n\right)O\left(\frac1n\right) = O\left(\frac1n\right).
\end{equation*}
\end{Rmk}

The sum $d_n$ is treated in the same way.

\paragraph{Proof of $f_n\to0$.}

The quantity $\cS_{n,\Omega_\eps}(\mu_n)$ is a sum of $O(n^2)$ terms reading like $(a/6n) |\mu_{n,x_i}-\mu_{n,x_j}|^2 $ for $(i,j)\in E_n$, or $(a/12n) |\mu_{n,x_i}-\mu_{n,x_j}|^2$ for $(i,j)\in C_n$. 
By Hypothesis \ref{hyp_local2}, only $O(n)$ among these terms can be only $o(1/n)$ and the others are $O(1/n^3)$. 
Hence $\cS_{n,\Omega_\eps}(\mu_n) = o(1)$.\\
On the other hand, the fact that $u_\infty \in \cC^1(\Omega_\eps, \bbR^3)$ ensures that all the terms in $\cS_{n,\Omega_\eps}(p_n(m_\infty))$ are $O(1/n^3)$, and therefore $\cS_{n,\Omega_\eps}(p_n(m_\infty))=O(1/n)$.

\paragraph{Proof of $b_n\to0$.}

We use the fact that $\cC^\infty(\Omega_\eps;\bbR^3)$ is dense in $H^1(\Omega_\eps;\bbR^3)$ for the $\|\cdot\|_{H^1(\Omega_\eps;\bbR^3)}$ norm. 
Let $\eta>0$, there exists $m_\eta\in\cC^\infty(\Omega_\eps;\bbR^3)$ such that $\|m_\infty-m_\eta\|_{H^1(\Omega_\eps;\bbR^3)} \leq \eta$.
Then $b_n$ can be split into two contributions $b_n=b_n^\eta+b_n^{'\eta}$ where
\begin{eqnarray*}
b_n^\eta & = & \frac{a}{n} \sum_{x\in\cL_{n,\Omega_\eps}} \sum_{y\in N_{n,\Omega_\eps,x}} (m_\eta(x)-m_\eta(y))\cdot(\psi_{n,x}-\psi_{n,y}), \\
b_n^{'\eta} & = & \frac{a}{n} \sum_{x\in \cL_{n,\Omega_\eps}} \sum_{y\in N_{n,\Omega_\eps,x}} ((m_\infty-m_\eta)(x)-(m_\infty-m_\eta)(y))\cdot(\psi_{n,x}-\psi_{n,y}).
\end{eqnarray*}

For the first term, we notice that
\begin{eqnarray*}
b_n^\eta
& = & \frac{a}{n} \sum_{x\in\cL_{n,\Omega_\eps}} \sum_{y\in N_{n,\Omega,x}} (m_\eta(x)-m_\eta(y)) \cdot \psi_{n,x} \\
& - & \frac{a}{n} \sum_{y\in\cL_{n,\Omega_\eps}} \sum_{x\in N_{n,\Omega,y}} (m_\eta(x)-m_\eta(y)) \cdot \psi_{n,y}.
\end{eqnarray*}
As in the previous proof, we write $y$ as $x+v$ and
\begin{equation*}
b_n^\eta 
= \frac{a}{n} \sum_{x\in\cL_{n,\Omega_\eps}} \sum_{v\in V_{n,0}} 
( (m_\eta(x)-m_\eta(x+v))-(m_\eta(x-v)-m_\eta(x)) ) \cdot \psi_{n,x}.
\end{equation*}
We estimate
\begin{equation*}
\begin{aligned}
(m_\eta(x)-m_\eta(x+v))&-(m_\eta(x-v)-m_\eta(x)) \\
&= \int_{t=1}^0 \nabla m_\eta (x+tv)\cdot v\ \d t 
- \int_{t=0}^{-1} \nabla m_\eta (x+tv)\cdot v\ \d t \\
& = - \nabla m_\eta (x)\cdot \frac{v}{|v|} 
+ \nabla m_\eta (x)\cdot \frac{v}{|v|} + O(|v|^{2}) \leq \frac{ca^2}{n^2}, 
\end{aligned}
\end{equation*}
where the constant $c$ only depends on the second derivative of $m_\eta$, which is bounded on $\Omega_\eps$. 
Thus
\begin{equation*}
|b_n^\eta| \leq \frac{ca^3}{n^3}  \sum_{x\in\cL_{n,\Omega_\eps}} \sum_{y\in N_{n,\Omega,x}} |\psi_{n,x}|.
\end{equation*}

Thanks to the compact injection of $H^1(\Omega;\bbR^3)$ in $L^1(\Omega; \bbR^3)$, we deduce that this term vanishes.
We obviously have
\begin{equation*}
|b_n^{'\eta}| \leq \frac{a^2C_\psi}{n^2} \sum_{x\in\cL_{n,\Omega_\eps}} \sum_{y\in N_{n,\Omega_\eps,x}} |(m_\infty-m_\eta)(x)-(m_\infty-m_\eta)(y)|.
\end{equation*}
For any function $u\in\cC^1(\Omega;\bbR^3)$, and $y=x+v$, if $[x,y]\subset \Omega$, then
\begin{eqnarray*}
u(y) - u(x) 
& = & \int_{t=1}^0 \nabla u(x+tv)\cdot v\ \d t \\
& = & \nabla u(x)\cdot v + \int_{t=1}^0 (\nabla u(x+tv)-\nabla u(x))\cdot v\ \d t.
\end{eqnarray*}
Applying this to $m_\infty$ and $m_\eta$,
\begin{eqnarray*}
(m_\infty-m_\eta)(y)-(m_\infty-m_\eta)(x)
& = & (\nabla m_\infty(x)-\nabla m_\eta(x)) \cdot v \\
& + & \int_{t=1}^0 (\nabla m_\infty(x+tv)- \nabla m_\infty(x)) \cdot v\ \d t \\
& - & \int_{t=1}^0 (\nabla m_\eta(x+tv)-\nabla m_\eta(x)) \cdot v\ \d t.
\end{eqnarray*}

\noindent
$\star$ By definition of $m_\eta$, $\|m_\infty-m_\eta\|_{H^1(\Omega;\bbR^3)} \leq \eta$, which implies that $\|\nabla m_\infty(x)-\nabla m_\eta(x)\|_{L^2(\Omega_\eps;\bbR^3)}\leq\eta$.
Hence
\begin{equation*}
\begin{aligned}
\frac{a^2C_\psi}{n^2} &\left| \sum_{x\in\cL_{n,\Omega_\eps}} \sum_{y\in N_{n,\Omega_\eps,x}}  (\nabla m_\infty(x)-\nabla m_\eta(x)).(y-x) \right| \\
& \leq 2 C_\psi \sum_{x\in\cL_{n,\Omega_\eps}} \left(\frac an\right)^3|\nabla m_\infty(x)-\nabla m_\eta(x)| \\
& \leq 2 C_\psi \left(\sum_{x\in\cL_{n,\Omega_\eps}} \left(\frac an\right)^3\right)^{1/2} \left(\sum_{x\in\cL_{n,\Omega_\eps}} \left(\frac an\right)^3|\nabla m_\infty(x)-\nabla m_\eta(x)|^2\right)^{1/2}. 
\end{aligned}
\end{equation*}
Since
\begin{equation*}
\lim_{n\to\infty} \sum_{x\in\cL_{n,\Omega_\eps} } \left(\frac{a}{n}\right)^3 |\nabla m_\infty(x)- \nabla m_\eta(x)|^2 = \int_{\Omega_\eps} |\nabla m_\infty(x)-\nabla m_\eta(x)|^2 \d x,
\end{equation*}
\begin{equation*}
\begin{aligned}
\lim_{n\to\infty} \frac{a^2C_\psi}{n^2} \sum_{x\in\cL_{n,\Omega_\eps}} & \sum_{y\in N_{n,\Omega_\eps,x}} (\nabla m_\infty(x)-\nabla m_\eta(x)).(y-x)| \\
& \leq C \|\nabla m_\infty(x)-\nabla m_\eta(x)\|_{L^2(\Omega_\eps;\bbR^3)} 
\leq C \eta.
\end{aligned}
\end{equation*}

\noindent
$\star$  Let us treat the second contribution involving the $\cC^\infty$ function $m_\eta$.
Since
\begin{equation*}
\int_{t=1}^0 (\nabla m_\eta(x+tv)-\nabla m_\eta(x))\cdot v\ \d t = \int_{t=1}^0 \int_{t'=0}^t \nabla^2 m_\eta (x+t'v)\cdot v\otimes v\ \d t' \d t,
\end{equation*}
and $\nabla^2 m_\eta$ is uniformly bounded on $\Omega_\eps$, we can estimate
\begin{equation*}
\frac{a^2C_\psi}{n^2} \sum_{x\in\cL_{n,\Omega_\eps} } \sum_{y\in N_{n,\Omega_\eps,x}} \int_{t=1}^0 (\nabla m_\eta(x+t(y-x))- \nabla m_\eta(x)) \cdot (y-x)\ \d t =O(\frac1n).
\end{equation*}

\noindent
$\star$ In the last contribution, function $m_\infty$ is only $\cC^1$:
\begin{equation*}
\begin{aligned}
\frac{a^2C_\psi}{n^2} & \sum_{x\in\cL_{n,\Omega_\eps} } \sum_{y\in N_{n,\Omega_\eps,x}} \left| \int_{t=1}^0 (\nabla m_\infty(x+t(y-x)) - \nabla m_\infty(x))\cdot (y-x)\ \d t \right| \\
& \leq  \frac{a^2C_\psi}{n^2} \sum_{x\in\cL_{n,\Omega_\eps} } \sum_{v\in N_{n,0}} \int_{t=0}^1 |(\nabla m_\infty(x+tv) - \nabla m_\infty(x))\cdot v|\ \d t \\
& \leq  \frac{a^3C_\psi}{n^3} \int_{t=0}^1 \sum_{x\in\cL_{n,\Omega_\eps} } \sum_{v\in N_{n,0}} | (\nabla m_\infty(x+tv)- \nabla m_\infty(x))\cdot\frac{v}{|v|}|\ \d t.
\end{aligned}
\end{equation*}
Since $m_\infty\in\cC^1(\Omega_\eps;\bbR^3)$, the elements of the sum converge uniformly towards 0 as $n\to\infty$, and therefore the integral is $o(n^{3})$. \\

Hence $\lim_{n\to\infty} |b_n^{'\eta}| \leq \eta$ and therefore $\lim_{n\to\infty} |b_n| \leq \eta$, for all $\eta>0$.
This leads to $\lim_{n\to\infty} |b_n|=0$, and the lemma is proved.

\subsubsection{Proof of Lemma \ref{lem_concl}}

The terms that occur in $\cE_{n,\rmex}(\mu_{n})$ but not in $\cE^\eps_{n,\rmex}(\mu_{n})$ are those involing couples $(x,y)$ where one at least of the nodes belong to $\Omega\setminus\Omega_\eps$. 
There are $\eps O(n^3)$ such nodes. 
Hence, by Hypothesis \ref{hyp_local2}, the contribution of $l_{n,\Omega}$ in $\cE_{n,\rmex}(\mu_{n})$ tends to zero, and there exists $c>0$ such that for all $\{\mu_n\}_{n\in\bbN^*} \in \gH$, 
\begin{equation*}
\lim_{n\to\infty} |\cE^\eps_{n,\rmex}(\mu_n)-\cE_{n,\rmex}(\mu_{n})|\leq c\eps.
\end{equation*}

We have already estimated $P_n(\mu)$ on $\Omega\setminus\Omega_{n,\eps}$, where $\Omega_{n,\eps}$ is a polyhedral subset of $\Omega$ which, like $\omega_\eps$, has an $O(\eps)$ Lebesgue measure.
We already know that $P_n(\mu_n)_{|\Omega\setminus\Omega_{n,\eps}}\in H^1(\Omega\setminus\Omega_{n,\eps};\bbR^3)$ and $P_n(u_n)_{|\omega_\eps} \rightharpoonup {m_\infty}_{|\Omega\setminus\Omega_{n,\eps}}$ in the sense of $H^1(\Omega\setminus\Omega_{n,\eps};\bbR^3)$, therefore
\begin{eqnarray*}
\int_{\Omega\setminus\Omega_{n,\eps}} |\nabla m_\infty(x)|^2\ \d x  
\leq \liminf_{n\to\infty} \int_{\Omega\setminus\Omega_{n,\eps}} |\nabla P_n(\mu_n)(x)|^2\ \d x 
\leq C \eps.
\end{eqnarray*}

\begin{eqnarray*}
\liminf_{n\to\infty} \cE_{n,\rmex}(\mu_n) 
& = & \liminf_{n\to\infty} (\cE^{\eps}_{n,\rmex}(\mu_{n}) + (\cE_{n,\rmex}(\mu_{n})-\cE^{\eps}_{n,\rmex}(\mu_{n}))) \\
& \geq & \liminf_{n\to\infty} (\cE^{\eps}_{n,\rmex}(\mu_{n})) 
+ \liminf_{n\to\infty} (\cE_{n,\rmex}(\mu_{n})-\cE^{\eps}_{n,\rmex}(\mu_{n})) \\
& \geq & \cE^{\eps}_{\infty,\rmex}(m_\infty) 
+  \liminf_{n\to\infty} (\cE_{n,\rmex}(\mu_{n})-\cE^{\eps}_{n,\rmex}(\mu_{n})) \\
& \geq & \cE_{\infty,\rmex}(m_\infty) 
+ (\cE^{\eps}_{\infty,\rmex}(m_\infty) - \cE_{\infty,\rmex}(m_\infty)) \\
&& + \liminf_{n\to\infty} (\cE_{n,\rmex}(\mu_{n})-\cE^{\eps}_{n,\rmex}(\mu_{n})) \\
& \geq &  \cE_{\infty,\rmex}(m_\infty) - (c+c_1)\eps.
\end{eqnarray*}
Since this holds for all $\eps >0$, we finally deduce that
\begin{equation*}
\liminf_{n\to\infty} \cE_{n,\rmex}(\mu_n) \geq \cE_{\infty,\rmex}(m_\infty).
\end{equation*}

\section{Other energy contributions}
\label{sec_total}

\subsection{Magnetostatics: demagnetizing energy}

We can define a mapping $h_\rmd: L^2(\bbR^3;\bbR^3) \mapsto L^2(\bbR^3;\bbR^3)$ by: for all $u \in L^2(\bbR^3;\bbR^3)$, $h_\rmd(u)$ is solution in the sense of distributions to
\begin{equation*}
\left \{ \begin{array}{rcl} 
\rot h_\rmd(u) & = & 0, \\  
\div h_\rmd(u) & = & - \div u. 
\end{array} \right.
\end{equation*}
When $u$ is the magnetization, $h_\rmd(u)$ is the demagnetizing field. Its energy is
\begin{equation*}
\cE_\rmd(u) = \frac{\mu_0}{2} \|h_\rmd(u)\|_{L^2(\bbR^3;\bbR^3)}.
\end{equation*}
For $u\in L^2(\Omega;\bbR^3)$, we denote by $\tilde{u}$ the $L^2(\bbR^3;\bbR^3)$ function which equals $u$ inside $\Omega$, and $0$ outside $\Omega$.
Hence for a spin distribution $\mu_n \in (\bbS^2)^{\cL_{n,\Omega}}$, the 
demagnetizing energy is defined by
\begin{equation*}
\cE_{n,\rmd}(\mu_n) = \frac{\mu_0}2 \left\|h_\rmd \Big( \widetilde{P_n(\mu_n)}\Big)\right\|_{L^2(\bbR^3;\bbR^3)}.
\end{equation*}

\subsection{External energy}

The external energy, or Zeeman contribution, models the influence of an external magnetic field on the magnetization. Given
such a field $h_\rmZ$ in $C^0(\bbR^3;\bbR^3)$, for all $u \in L^2(\bbR^3;\bbR^3)$ supported in $\Omega$, we set
\begin{equation*}
\cE_\rmZ(u) =-\int_{\Omega} h_\rmZ\cdot u\ \rmd x,
\end{equation*}
this energy in maximized when $u$ is almost everywhere in $\Omega$ in the direction of $h_\rmZ$.\\
At the micro-scale, for a spin distribution $\mu_n \in (\bbS^2)^{\cL_{n,\Omega}}$, we set
\begin{equation*}
\cE_{n,\rmZ}(\mu_n) =-\left(\frac{a}{n}\right)^3\sum_{x\in\cL_{n,\Omega}} h_\rmZ(x)\cdot \mu_{n,x}.
\end{equation*}

\subsection{Total energy}

We define the total energy summing up the exchange, demagnetizing, and external energies both in the lattice context:
\begin{equation*}
\forall \mu_n \in (\bbS^2)^{\cL_{n,\Omega}},\ \cE_n(\mu_n) = \cE_{n,\rmex}(\mu_n)+ \cE_{n,\rmd}(\mu_n) + \cE_{n,\rmZ}(\mu_n),
\end{equation*}
and the limit continuous one:
\begin{equation*}
\forall u \in H^1(\Omega;\bbR^3),\ \cE_\infty(u) =  \cE_{\infty,\rmex}(u)+ \cE_{\infty,\rmd}(u) + \cE_\rmZ(u).
\end{equation*}

\begin{Th}
\label{th_total}
Let $(\mu_n)_{n\in\bbN^*}\in\gH$. In the sense of the topology defined by Definition \ref{def_converge}
\begin{equation*}
\cE_{n} \xrightarrow[n\to\infty]{\Gamma} \cE_\infty.
\end{equation*}
Moreover, this convergence is compatible with the unit norm constraint.
\end{Th}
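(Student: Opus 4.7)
The plan is to leverage Theorem \ref{th_main} by showing that the demagnetizing and Zeeman contributions are in fact \emph{continuous} (not merely lower semi-continuous) along sequences $\bmmu \in \gH$ converging in the sense of Definition \ref{def_converge}; once this is established, the total $\Gamma$-limit follows by adding a continuous perturbation to the known $\Gamma$-limit of $\cE_{n,\rmex}$. The crucial preliminary is that $P_n(\mu_n) \rightharpoonup m_\infty$ in $H^1(\Omega;\bbR^3)$, combined with the piecewise $\cC^1$ boundary of $\Omega$, yields by Rellich--Kondrachov a strong convergence $P_n(\mu_n) \to m_\infty$ in $L^2(\Omega;\bbR^3)$, and hence $\widetilde{P_n(\mu_n)} \to \tilde m_\infty$ strongly in $L^2(\bbR^3;\bbR^3)$.

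For the Zeeman energy, I would compare the lattice sum $-(a/n)^3 \sum_x h_\rmZ(x)\cdot \mu_{n,x}$ with the integral $-\int_\Omega h_\rmZ \cdot P_n(\mu_n)\,\d x$. The error is controlled by the uniform continuity of $h_\rmZ$ on a compact neighborhood of $\Omega$ (giving an $o(1)$ variation of $h_\rmZ$ over each tetrahedron of diameter $O(1/n)$), plus a boundary contribution from $\Omega\setminus\Omega_n$ of order $1/n$ in the spirit of Remark \ref{rmk_polyhedra}. The integral itself converges to $\cE_\rmZ(m_\infty)$ by strong $L^2$ convergence of $P_n(\mu_n)$. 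For the demagnetizing energy, the operator $h_\rmd$ is the Helmholtz projection $h_\rmd(u) = \nabla \Delta^{-1} \div u$ of $L^2(\bbR^3;\bbR^3)$ onto the closed subspace of curl-free fields, and is therefore a bounded linear operator. Hence $h_\rmd(\widetilde{P_n(\mu_n)}) \to h_\rmd(\tilde m_\infty)$ strongly in $L^2(\bbR^3;\bbR^3)$, and $\cE_{n,\rmd}(\mu_n) \to \cE_{\infty,\rmd}(m_\infty)$.

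The $\Gamma$-convergence is then assembled as follows. For the lower semi-continuity part, I would write $\liminf_n \cE_n(\mu_n) \geq \liminf_n \cE_{n,\rmex}(\mu_n) + \lim_n \cE_{n,\rmd}(\mu_n) + \lim_n \cE_{n,\rmZ}(\mu_n)$ and apply Theorem \ref{th_main} on the first term. For the construction part, given $m\in H^1(\Omega;\bbR^3)$, I would re-use the recovery sequence $\bmmu$ produced by Theorem \ref{th_main}: its exchange part satisfies $\limsup_n \cE_{n,\rmex}(\mu_n) \leq \cE_{\infty,\rmex}(m)$, while the other two contributions converge by the continuity just proved, yielding $\limsup_n \cE_n(\mu_n) \leq \cE_\infty(m)$. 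The unit-norm compatibility carries over directly since Lemma \ref{lem_construct} provides a recovery sequence in $\gH_1$ when $|m|=1$, and neither the demagnetizing nor the external term imposes any further constraint on the construction.

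The only genuinely non-trivial ingredient is the continuity of $\cE_{n,\rmd}$: it rests on the $L^2$-boundedness of the nonlocal Helmholtz projection $h_\rmd$, which is classical but essential here. Everything else reduces to a Riemann-sum estimate for $\cE_{n,\rmZ}$ and the stability of $\Gamma$-limits under continuous perturbations, both of which are routine once the strong $L^2$ convergence of $P_n(\mu_n)$ is in hand.
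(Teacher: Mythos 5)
Your proposal is correct and follows essentially the same route as the paper: strong $L^2(\Omega;\bbR^3)$ convergence of $P_n(\mu_n)$ from the weak $H^1$ bound, continuity of the linear bounded operator $h_\rmd$ to pass to the limit in the demagnetizing energy, a Riemann-sum argument for the Zeeman term, and stability of the $\Gamma$-limit of $\cE_{n,\rmex}$ under these continuous perturbations. Your write-up is in fact somewhat more explicit than the paper's (naming the Helmholtz projection and spelling out the liminf/limsup assembly), but the underlying argument is identical.
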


\begin{proof}
We have already shown that 
\begin{equation*}
\cE_{n,\rmex} \xrightarrow[n\to\infty]{\Gamma} \cE_{\infty,\rmex}.
\end{equation*} 

Moreover $h_\rmd: L^2(\bbR^3;\bbR^3) \mapsto L^2(\bbR^3;\bbR^3)$ is linear and continuous.
We therefore choose $(\mu_n)_{n\in\bbN^*} \in \gH$ such that $\displaystyle \mu_n \xrightarrow[n\to\infty]{} \mu \in H^1(\Omega;\bbR^3) $. 
This means that the sequence $(P_n(\mu_n))$ is weakly convergent in $H^1(\Omega;\bbR^3)$ to $\mu$. 
Hence 
\begin{equation*}
P_n(\mu_n) \xrightarrow[n\to\infty]{L^2(\Omega;\bbR^3)} \mu
\textrm{ and }
\widetilde{P_n(\mu_n)} \xrightarrow[n\to\infty]{L^2(\bbR^3;\bbR^3)} \tilde{\mu}.
\end{equation*}
In particular $\cE_{n,\rmd}(\mu_n) \xrightarrow[n\to\infty]{} \cE_{\infty,\rmd}(\mu)$.\\

Besides if $\mu \in H^1(\Omega;\bbR^3)$, we know that $p_n(\mu) \xrightarrow[n\to\infty]{} \mu$ in $H^1(\Omega;\bbR^3)$, which ends the proof of 
\begin{equation*}
\cE_{n,\rmd} \xrightarrow[n\to\infty]{\Gamma} \cE_{\infty,\rmd}.
\end{equation*}
Last, we notice that
\begin{equation*}
\cE_{n,\rmZ}(\mu_n)=-\left(\frac{a}{n}\right)^3\sum_{x\in\cL_{n,\Omega}} h_\rmZ(x)\cdot \mu_{n,x} = -\left(\frac{a}{n}\right)^3\sum_{x\in\cL_{n,\Omega}} h_\rmZ(x)\cdot P_n(\mu_n)(x),
\end{equation*}
which can be re-written as the approximation of $-\int_{\Omega} h_\rmZ(x)\cdot P_n(\mu_n)(x) \rmd x$ thanks to piecewise approximations. Then, using the regularity of $h_\rmZ$ and the convergence of $\mu_n$ in $L^2(\Omega;\bbR^3)$ toward $\mu$, we prove that
\begin{equation*}
\cE_{n,\rmZ}(\mu_n) \xrightarrow[n\to\infty]{} \cE_\rmZ(\mu).
\end{equation*}
\end{proof}

\section{Conclusion}

In this paper, we prove a $\Gamma$-convergence result from a discrete description of ferromagnetic materials at the microscopic scale to the continuous one. This result has been shown thanks to a rigidity hypothesis on the lattice of magnetic moments. This modeling hypothesis is based on the Heisenberg interaction phenomenon and could be justified by a time multi-scale study. The new hypothesis would take into account the speed of the Heisenberg relaxation compared to the Larmor precession process.

The results in this paper are the seed in order to address the micro--mesoscopic limit for dynamic processes to be able to better understand the
dissipation phenomena involved in the mesoscopic Landau--Lifchitz system.


\begin{thebibliography}{99}
\bibitem{Aharoni:introduc}
Amikam Aharoni.
\newblock {\em Introduction to the theory of ferromagnetism}.
\newblock Oxford Science Publications, 1996.

\bibitem{MR2505362}
Roberto Alicandro and Marco Cicalese
\newblock Variational analysis of the asymptotics of the $XY$ model.
\newblock {\em Archive for Rational Mechanics and Analysis}, 192(3):501--536, 2009.

\bibitem{proc072215}
Fabrice Boust, Nicolas Vukadinovic, and St\'ephane Labb\'e.
\newblock {3D} dynamic micromagnetic simulations of susceptibility spectra in soft ferromagnetic particles.
\newblock {\em ESAIM Proceedings}, 22:127--131, February 2008.

\bibitem{Braides02}
Andrea Braides.
\newblock{\em Gamma-Convergence for Beginners},
\newblock Oxford University Press, 2002.

\bibitem{Brown63}
William~Fuller {Brown, Jr}.
\newblock {\em Micromagnetics}, volume~18 of {\em Interscience Tracts on Physics and Astronomy}.
\newblock John Wiley and Sons, 1963.

\bibitem{Brown66}
William~Fuller {Brown, Jr}.
\newblock {\em Magnetoelastic Interactions}, volume~9 of {\em Tracts in Natural Philosophy}.
\newblock Springer-Verlag, 1966.

\bibitem{Brown.LaBonte:Structure}
William~Fuller {Brown, Jr} and Anton E. LaBonte.
\newblock Structure and energy of one-dimensional domain walls in ferromagnetic thin films.
\newblock {\em Journal of Applied Physics}, 36(4):1380--1386, 1965.

\bibitem{MR1423008}
Antonio DeSimone and Paolo Podio-Guidugli.
\newblock On the continuum theory of deformable ferromagnetic solids.
\newblock {\em Archive for Rational Mechanics and Analysis}, 136(3):201--233, 1996 .

\bibitem{Fischer:grain.size}
R.~Fisher, Thomas Shrefl, Helmut Kronm\"uller, and Josef Fidler.
\newblock Grain-size dependence of remanence and coercive field of isotropic nanocrystalline composite permanent magnets.
\newblock {\em Journal of Magnetism and Magnetic Materials}, 153:35--49, 1996.

\bibitem{MR2060431}
Laurence Halpern and St\'ephane Labb\'e.
\newblock La th\'eorie du micromagn\'etisme. {M}od\'elisation et simulation du comportement des mat\'eriaux magn\'etiques.
\newblock {\em Matapli}, 66:77--92, 2001.

\bibitem{MR1308877}
Richard D. James  and Stefan M\"uller.
\newblock Internal variables and fine-scale oscillations in micromagnetics.
\newblock {\em Continuum Mechanics and Thermodynamics}, 6(4):291--336, 1994.

\bibitem{Labbe.Bertin:Microwave}
St\'ephane Labb\'e and Pierre-Yves Bertin.
\newblock Microwave polarizability of ferrite particles with non-uniform magnetization.
\newblock {\em Journal of Magnetism and Magnetic Materials}, 206:93--105, 1999.

\bibitem{MR2186037}
Anja Schlömerkemper.
\newblock Mathematical derivation of the continuum limit of the magnetic force between two parts of a rigid crystalline material.
\newblock {\em Archive for Rational Mechanics and Analysis}, 176(2):227--269, 2005.

\bibitem{Vukad:Influence}
Nicolas Vukadinovic, Jamal Ben Youssef, and Henri Le Gall.
\newblock Influence of magnetic parameters on microwave absorption of domain mode ferromagnetic resonance.
\newblock {\em Journal of Magnetism and Magnetic Materials}, 150:213--222,  1995.

\bibitem{Ziemer89}
William P. Ziemer.
\newblock{\em Weakly differentiable functions: Sobolev Spaces and Functions of Bounded Variation}, volume~120 of {\em Graduate Texts in Mathematics}.
\newblock Springer, 1989.

\end{thebibliography}
\end{document}